\numberwithin{theorem}{section}
\newcommand{\TheTitle}{On the Control of Density-Dependent Stochastic Population Processes with Time-Varying Behavior} 
\newcommand{\TheAuthors}{Y.~Lu, M.S.~Squillante, C.W.~Wu}
\title{{\TheTitle}\thanks{Submitted to the editors DATE.
\funding{This material is based upon work supported in part with funding from the Laboratory for Analytic Sciences (LAS). Any opinions, findings, conclusions, or recommendations expressed in this material are those of the author(s) and do not necessarily reflect the views of the LAS and/or any agency or entity of the United States Government.}}}
\author{
  Yingdong Lu\thanks{Mathematical Sciences Department, IBM T.J.\ Watson Research Center, Yorktown Heights, NY 10598, USA
	(\email{yingdong@us.ibm.com}).}
  \and
  Mark S.\ Squillante\thanks{Mathematical Sciences Department, IBM T.J.\ Watson Research Center, Yorktown Heights, NY 10598, USA
	(\email{mss@us.ibm.com}).}
  \and
  Chai Wah Wu\thanks{Mathematical Sciences Department, IBM T.J.\ Watson Research Center, Yorktown Heights, NY 10598, USA
	(\email{cwwu@us.ibm.com}).}
}
\DeclareMathAlphabet{\mathpzc}{OT1}{pzc}{m}{it}
\DeclareMathOperator*{\argmax}{\arg\max}
\newcommand{\beqn}{\begin{equation}}
\newcommand{\eeqn}{\end{equation}}
\newcommand{\ex}{{\mathbb{E}}}
\newcommand{\pr}{{\mathbb{P}}}
\newcommand\Reals{{\mathbb{R}}}
\newcommand\Ints{{\mathbb{Z}}}
\newcommand{\g}{\lambda}                
\newcommand{\om}{\omega}                
\newcommand{\e}{\epsilon}                
\newcommand{\cC}{{\mathcal{C}}}
\newcommand{\cD}{{\mathcal{D}}}
\newcommand{\cF}{{\mathcal{F}}}
\newcommand{\cR}{{\mathcal{R}}}
\newcommand{\MSS}[1]{{\color{black}{ #1 }}}
\begin{document}

\maketitle

\begin{abstract}
The study of density-dependent stochastic population processes is important from a historical perspective
as well as from the perspective of a number of existing and emerging applications today.
In more recent applications of these processes, it can be especially important to include time-varying parameters
for the rates that impact the density-dependent population structures and behaviors.
 Under a mean-field scaling, we show that such
density-dependent stochastic population processes
with time-varying behavior converge to a corresponding dynamical system.
 We analogously establish that the optimal control of such
density-dependent stochastic population processes
converges to the optimal control of the limiting dynamical system.
 An analysis of both the dynamical system and its optimal control renders various important mathematical properties of interest.
\end{abstract}

\begin{keywords}
Density-dependent population processes,
  Time-varying behavior,
  Mean-field limits,
  Dynamical systems,
  Optimal control.
\end{keywords}

\begin{AMS}
  68Q25, 68R10, 68U05
\end{AMS}

\section{Introduction}
The general class of density-dependent stochastic population processes and the mathematical analysis of such processes
have a very rich and important history.
A starting point is likely the seminal work of Bernoulli on epidemiological models in the 1760s~\cite{Bern1766-simple,DieHee02}.
The general class of density-dependent population processes can be used to model any system that involves a population of similar
particles which interact,
such as processes with viral-propagation behaviors, logistic-growth behaviors, and chemical reaction behaviors~\cite[Chapter 11]{EthKur86}.
The study of these stochastic models continues to be important today across a wide variety of problem domains,
including a recent National Academy of Science report
on a land management program~\cite{NAS2013}.

Recent and emerging applications have received considerable attention in the research literature, which include mathematical models
of various aspects of large networks such as the complex structures and behaviors of communication networks, social media/networks,
viral-propagation networks (e.g., epidemics, computer viruses and worms), and financial networks;
refer to, e.g., \cite{GanMasTow05,EasKle10} and the references therein.
The study of social networks and related behaviors, in particular, continue to grow in importance and popularity;
see, e.g., \cite{BaTeBa16} and the references therein.
On the other hand, research on the control and optimization of these mathematical models of various aspects of large networks
has been much more limited; refer to, e.g., \cite{BoChGa+10}.
Even more importantly, this entire body of work has focused solely on static (non-time-varying) model parameters that impact
the complex structures and behaviors of the large networks of interest.

Our focus in this paper is on the general class of density-dependent stochastic population processes with time-varying
parameters.
Such time-varying behaviors often arise in many existing and emerging applications, especially those where one observes behaviors that
lead to forms of exacerbated complex dynamics and actions frequently found in communication, financial, social, and viral-propagation networks.
Our objective is twofold, namely to derive a mathematical analysis of such models and to derive the optimal control of these mathematical models.
In particular, we consider variants of the classical mathematical model of density-dependent stochastic population processes
analyzed by Kurtz \cite{Kurt71},\cite[Chapter 11]{EthKur86}, extending the analysis to first incorporate time-varying behavior
for the transition intensities of the Markov process and to then investigate aspects of the corresponding stochastic optimal
control problem.

We start by formally presenting a continuous-time, discrete-state density-dependent stochastic population process model in which
the state of each particle comprising the population and the dynamics of its state transitions are governed by functions of time.
Taking the limit as the population size tends to infinity under a mean-field scaling, we establish that this limiting stochastic
process converges in general to a continuous-state nonautonomous dynamical system.
In doing so, we generalize and extend the classical results of Kurtz~\cite{Kurt71},\cite[Chapter 11]{EthKur86} and the recent results
in \cite{ArmBec16,Armb16} to establish corresponding versions of these results that hold under time-varying parameters;
this involves technical arguments and details that are unique to the corresponding time-varying systems.
We then formally present a corresponding optimal control problem with respect to the controlled density-dependent stochastic population
process with time-varying parameters and establish an analogous result by showing that this optimally controlled stochastic process is
asymptotically equivalent to the optimal control of the limiting dynamical system as the population size tends to infinity under a mean-field scaling.
In doing so, we generalize and extend the results in \cite{GaGaBo12} to establish corresponding versions of these results that
hold under time-varying parameters;
once again, this involves technical arguments and details that are unique to the corresponding time-varying systems.

Our attention then turns to the limiting continuous-state nonautonomous dynamical system where we first derive various mathematical
properties of this system, including equilibrium points, asymptotic states, stability and related results.
It is well known that nonautonomous dynamical systems (e.g., $\dot{x} = f(x,t)$) can have vastly different and more complex behavior than
autonomous systems (e.g., $\dot{x} = f(x)$) even when the vector field $f$ is linear in $x$.
We then derive mathematical properties of the optimal dynamic control policy for the limiting continuous-state nonautonomous
dynamical system with the objective to maximize various instances of a general utility function.

It is important to note that our density-dependent stochastic population process model and results are quite general, and in particular
not at all restricted to the examples of viral propagation, logistic growth, and chemical reaction applications discussed herein.
More specifically, particles comprising the population can represent any entities of interest, the state of each particle can represent
any characteristics of interest, and the dynamics of state transitions can represent any phenomena of interest with respect to the particles
and their interactions.
In fact, our interest in these mathematical problems was motivated by a recent study of viral-propagation behaviors of people, energy sources,
and cybersystems~\cite{LuSqWu+15}.

The paper is organized as follows.
Section~\ref{sec:stochastic} presents our model and analysis of the general class of density-dependent stochastic population processes with time-varying parameters.
Section~\ref{sec:limit} presents our model and analysis of the limiting dynamical system, followed by concluding remarks.
Appendix~\ref{app:lemmas} contains some of our additional theoretical results and
Appendix~\ref{app:basicDST} contains some basic results from dynamical systems theory.

\section{Density-Dependent Stochastic Population Processes}
\label{sec:stochastic} 
We first define our model of the general class of density-dependent population processes with time-varying parameters and then
turn to establish that such a stochastic process is asymptotically equivalent to a set of ordinary differential equations (ODEs)
in the limit as the population size tends to infinity under a mean-field scaling.
We next show a similar result for the corresponding control problem by establishing that such an optimally controlled stochastic process
is asymptotically equivalent to the optimal control of the set of ODEs in the limit as the population size tends to infinity under a
mean-field scaling.
A special case of viral-propagation processes with time-varying parameters is then considered using an alternative set of arguments.

\subsection{Mathematical Model}
\label{sec:math-model}
Consider a sequence of Markov processes
$$\hat{Z}_n = \{ (\hat{X}_{n,1}(t), \ldots, \hat{X}_{n,d}(t)) ; t \geq 0 \}$$
indexed by the fixed parameter $n \in \Ints^+ := \{1, 2, \ldots \}$
and defined over the probability space $(\hat{\Omega}_n, \cF_n, \pr_n)$,
composed of the state space $\hat{\Omega}_n \subseteq \Ints^d$,
$\sigma$-algebra $\cF_n$ and probability measure $\pr_n$, with initial probability distribution $\bm\alpha_n$.
The fixed parameter $n$ has different interpretations depending upon the specific application and details of the
stochastic process of interest, but $n$ generically represents a form of the magnitude of a system involving similar particles that interact.
For example, in the context of logistic growth,
$n$ reflects the area of a region occupied by a certain population, $d=1$,
and the process $\hat{Z}_n(t)$ represents the population density at time $t$.
In the context of viral propagation,
$n$ reflects the total population size, $d=2$,
and the process $\hat{Z}_n(t)$ represents the ordered pair $(\hat{X}_n(t),\hat{Y}_n(t))$ of non-infected and infected
population at time $t$, respectively.
Lastly, in the context of chemical reactions,
$n$ reflects the volume of a chemical system containing $d$ chemical reactants,
and the process $\hat{Z}_n(t)$ represents the ordered tuple $(\hat{X}_1(t), \ldots, \hat{X}_d(t))$ of the numbers of
molecules of all reactants at time $t$.

Define $\Omega \subset \Reals^d$ and $\Omega_n := \Omega \cap \{ \ell/n : \ell \in \hat{\Omega}_n \}$.
The time-dependent infinitesimal generator $Q_n(t) = [q^{(n)}_{i,j}(t)]_{i,j\in\hat{\Omega}_n}$
for the Markov process $\hat{Z}_n$ has transition intensities that bear the general form
$q_{k, k+\ell}^{(n)}(t) = n \beta_{\ell, t}(k/n)$, for $k,k+\ell \in \hat{\Omega}_n$,
where $\beta_{\ell,t}(\cdot)$ are nonnegative functions defined on $\Omega$, for $\ell \in \hat{\Omega}_n$ and $t \geq 0$.
We assume throughout that $\beta_{\ell, t}(x)$ is continuous in $t$ and
that $(x + \ell/n) \in \Omega_n$ when $\beta_{\ell, t}(x) > 0$, both for $x \in \Omega_n$.
As a specific instance of this general form for logistic-growth processes,
in terms of the time-varying birth rate $\lambda(t)$ and death rate $\mu(t)$ proportional to the population size,
we consider the transition intensities
\begin{equation*}
q^{(n)}_{i,i+1}(t) = \lambda(t) \frac{i}{n} i = n\lambda(t) \frac{i}{n} \frac{i}{n} , \qquad
q^{(n)}_{i,i-1}(t) = \mu(t) \frac{i}{n} i = n\mu(t) \frac{i}{n} \frac{i}{n} ,
\end{equation*}
where the latter equalities are instances of the general form $n \beta_{\ell, t}(k/n)$.
For the specific instance of viral-propagation processes,
in terms of the time-varying infection rate $\lambda(t)$ and cure rate $\mu(t)$ proportional to fractions of the total population size,
we consider the transition intensities
\begin{equation}\label{eq:VP:q}
q^{(n)}_{(i,j),(i-1,j+1)}(t) = \lambda(t) i \frac{j}{n} = n\lambda(t) \frac{i}{n} \frac{j}{n} , \qquad
q^{(n)}_{(i,j),(i+1,j-1)}(t) = \mu(t) j = n\mu(t) \frac{j}{n} ,
\end{equation}
where the latter equalities are once again instances of the general form $n \beta_{\ell, t}(k/n)$.
The functions $\lambda(t)$ and $\mu(t)$ are assumed throughout to be continuous in $t$, consistent with the continuity assumption on $\beta_{\cdot, t}(\cdot)$.

We note that the above definition of the viral-propagation stochastic process $\hat{Z}_n$ is slightly different from the corresponding
(non-time-varying) model of Kurtz~\cite{Kurt71,EthKur86}, in that we allow an infected individual who is cured to become infected at a later time.
Both models assume connections among the population form a complete graph.
In any case, our results hold for both types of viral-propagation models as well as variations thereof with time-varying transition rates
$q_{k, k+\ell}^{(n)}(t)$ of the general form $n \beta_{\ell, t}(k/n)$.
Moreover, our results typically carryforward with little additional effort to an even more general form of
$q_{k, k+\ell}^{(n)}(t) = n ( \beta_{\ell, t}(k/n)+ O(1/n) )$~\cite[Chapter 11]{EthKur86}.

\subsection{Mean-Field Limit of Process}
We proceed by proving a stronger result that then implies the desired almost surely (a.s.) process limit for density-dependent population processes.
Suppose that the Markov Chain $\hat{Z}_n(t)$ is as defined above with time-dependent transition intensities of the general form
$q_{k, k+\ell}^{(n)}(t) = n \beta_{\ell, t}(k/n)$, for $k,k+\ell \in \hat{\Omega}_n$,
with nonnegative functions $\beta_{\ell,t}(x)$ defined as above on $\Omega$ for $\ell \in \hat{\Omega}_n$ and $t \geq 0$,
continuous in $t$, and Lipschitz continuous in $x=k/n$ (by definition), $x \in \Omega_n$.
Here we consider the parameter $n$ to be general, having different interpretations in different contexts.
From the martingale-problem method (see, e.g., \cite[Chapters~4,~6]{EthKur86}), we devise that $\hat{Z}_n(t)$ has the integral representation
\begin{align} \label{eqn:int_rep}
\hat{Z}_n (t) & = \hat{Z}_n(0) + \sum_\ell \ell W_\ell \left( n \int_0^t  \beta_{\ell, s}\left(\frac{\hat{Z}_n(s)}{n}\right) ds\right),
\end{align}
where the $W_\ell$ are independent standard Poisson processes.
Define $F_t (z) := \sum_\ell \ell \beta_{\ell, t}(z)$, $z \in \Omega_n$.
Further define $Z_n(t) := \hat{Z}_n(t)/n$ on the state space $\Omega_n$ with time-dependent transition intensities
$q_{i,j}^{(n)}(t) = n\beta_{n(j-i),t}(i)$, $i,j \in \Omega_n$.

Our strategy for the desired proof is to first obtain the integral representation of $Z_n(t)$, which leads to the generator of $Z_n(t)$
again through the martingale-problem method and the law of large numbers
for the Poisson process.
From this and the above we derive the desired expression
\begin{equation} \label{eqn:scaled_int_rep}
Z_n(t) \; = \; Z_n(0) + \sum_\ell  \frac{\ell}{n} \bar{W}_\ell \left( n \int_0^t \beta_{\ell, s} (Z_n(s)) ds \right) + \int_0^t F_s(Z_n(s))ds,
\end{equation}
where $\bar{W}_\ell$ denotes the centered Poisson process, i.e., $\bar{W}_\ell(x)= W_\ell (x)-x$.
It then follows, from known results for the time-dependent martingale problem (see, e.g., \cite[Chapter 7]{EthKur86}),
that the generator $A_n(t)$ for $Z_n(t)$ has the form
\begin{align} \label{eqn:gen_Z}
A_n(t) f(x) & = \sum_\ell n \beta_{\ell, t }(x) \bigg[f\Big(x+ \frac{\ell}{n}\Big) -f(x)\bigg] \nonumber \\
	& =  \sum_\ell n \beta_{\ell, t }(x) \bigg[f\Big(x+ \frac{\ell}{n}\Big) -f(x)- \frac{\ell \cdot \nabla f(x)}{n}\bigg]  + F_t(x) \cdot \nabla f(x),
\end{align}
for $x \in \Omega_n$.

One of our main results can now be presented, upon noting the following basic fact:
\begin{equation}
\label{eqn:centered}
\lim_{n\rightarrow \infty} \sup_{u\le v} \Big|\frac{\bar{W}_\ell(nu) }{n} \Big| =  0, \qquad a.s., v\ge 0 .
\end{equation}

\begin{theorem}
	Suppose that for each compact set $K \subset \Omega$
	\begin{equation*}
		\sum_\ell |\ell| \sup_{x\in K} \beta_{\ell,t} (x) < \infty, \qquad \forall t\ge 0 ,
	\end{equation*}
	and there exists $M_K>0$ such that
	\begin{equation}
	\label{eqn:lip}
	|F_t(x)-F_t(y) |\le M_K|x-y|, \qquad \forall x,y \in K, t \ge 0.
	\end{equation}
	Further supposing $Z_n(t)$ satisfies \eqref{eqn:scaled_int_rep}, $\lim_{n\rightarrow \infty} Z_n(0) = z_0$, and a process $Z(t)$ satisfies
	\begin{equation}
		Z(t) \; = \; z_0 + \int_0^t F_s(Z(s)) ds, \qquad t\ge 0,
	\label{eq:Z-dynamics}
	\end{equation}
	then we have, for every $t\ge 0$,
	\begin{equation}
	\label{eqn:main_conv}
	\lim_{n\rightarrow \infty}\sup_{s\le t} |Z_n(s) -Z(s)| =0, \qquad a.s.
	\end{equation}
	\label{thm:Kurtz-new2}
\end{theorem}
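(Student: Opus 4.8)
The plan is to carry out a Gronwall-type argument on the difference $|Z_n(s)-Z(s)|$ localized to compact sets, exactly in the spirit of Kurtz's original proof but with careful attention to the time-dependence of $F_t$ and $\beta_{\ell,t}$. First I would subtract \eqref{eq:Z-dynamics} from \eqref{eqn:scaled_int_rep} to obtain, for $s\le t$,
\begin{equation*}
Z_n(s)-Z(s) = \big(Z_n(0)-z_0\big) + \sum_\ell \frac{\ell}{n}\bar W_\ell\!\left(n\int_0^s \beta_{\ell,r}(Z_n(r))\,dr\right) + \int_0^s \big(F_r(Z_n(r)) - F_r(Z(r))\big)\,dr .
\end{equation*}
Call the middle term the \emph{noise term} $\varepsilon_n(s)$. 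The strategy is: (i) show the noise term vanishes uniformly on $[0,t]$ a.s.; (ii) use the Lipschitz bound \eqref{eqn:lip} on the last integral; (iii) conclude by Gronwall.

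For step (i), I would first argue that the trajectories stay, with probability one and for $n$ large, in a fixed compact set. Since $Z(\cdot)$ is continuous on $[0,t]$, its image lies in some compact $K_0\subset\Omega$; enlarge to a compact $K$ whose interior contains $K_0$. A stopping-time argument (stop the process when $Z_n$ first exits $K$) confines the relevant dynamics to $K$; one shows the exit time exceeds $t$ eventually, a.s., as a byproduct of the estimate itself. On $K$ we have the uniform bound $\int_0^s \beta_{\ell,r}(Z_n(r))\,dr \le s\,\sup_{x\in K}\beta_{\ell,r}(x)$, which after an application of the summability hypothesis $\sum_\ell|\ell|\sup_{x\in K}\beta_{\ell,t}(x)<\infty$ (integrated in $r$ over $[0,t]$, using continuity in $t$ to get a finite bound $C_{K,t}$) lets me write
\begin{equation*}
\sup_{s\le t}|\varepsilon_n(s)| \;\le\; \sum_\ell |\ell|\;\sup_{u\le C_{K,t}}\Big|\frac{\bar W_\ell(nu)}{n}\Big| .
\end{equation*}
Each summand tends to $0$ a.s. by \eqref{eqn:centered}, and the dominated-convergence argument over the (possibly infinite) sum $\ell$ — dominating $|\ell|\sup_u|\bar W_\ell(nu)/n|$ by, say, a uniform-over-$n$ bound obtained from a maximal inequality for the centered Poisson process together with the summability of $|\ell|\sup_{x\in K}\beta_{\ell,\cdot}$ — gives $\sup_{s\le t}|\varepsilon_n(s)|\to 0$ a.s. This interchange of limit and infinite sum is the one genuinely delicate point; if one prefers to avoid it, restrict first to finitely many $\ell$ and push a tail estimate, again using the summability hypothesis, uniformly in $n$.

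Steps (ii)–(iii) are routine. On the event that both trajectories remain in $K$ up to time $t$, \eqref{eqn:lip} gives $|F_r(Z_n(r))-F_r(Z(r))|\le M_K|Z_n(r)-Z(r)|$ with $M_K$ \emph{uniform in $r$}, which is exactly what makes the time-varying case go through as smoothly as the autonomous one. Writing $\delta_n(s) := \sup_{u\le s}|Z_n(u)-Z(u)|$, the displayed identity yields
\begin{equation*}
\delta_n(s) \;\le\; |Z_n(0)-z_0| + \sup_{u\le t}|\varepsilon_n(u)| + M_K\int_0^s \delta_n(r)\,dr ,
\end{equation*}
so by Gronwall's inequality $\delta_n(t)\le \big(|Z_n(0)-z_0| + \sup_{u\le t}|\varepsilon_n(u)|\big)e^{M_K t}$. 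Since $Z_n(0)\to z_0$ and the noise term vanishes a.s., the right-hand side tends to $0$, giving \eqref{eqn:main_conv}. Finally I would close the loop on the localization: the bound just obtained shows $Z_n$ stays within $o(1)$ of $Z(\cdot)\subset K_0$ on $[0,t]$, hence inside $K$ for all large $n$ a.s., justifying the use of $M_K$ and of the compact-set hypotheses throughout. The main obstacle, as noted, is making the noise-term estimate fully rigorous when the index set for $\ell$ is infinite — handling the tail uniformly in $n$ via the summability assumption is where the real work lies; everything else parallels Theorem 11.2.1 of \cite{EthKur86} with $M_K$ in place of a fixed Lipschitz constant.
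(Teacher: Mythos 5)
Your proposal is correct and follows essentially the same route as the paper's proof: the same decomposition of $Z_n - Z$ into an initial-condition term, a centered-Poisson noise term $\epsilon_n$, and a drift-difference term, with the Lipschitz bound \eqref{eqn:lip} and Gronwall giving $|Z_n(t)-Z(t)|\le(|Z_n(0)-z_0|+\epsilon_n(t))e^{M t}$, and the noise term killed via \eqref{eqn:centered}, the summability hypothesis, and dominated convergence. Your explicit localization to a compact set containing the limit trajectory (with a stopping-time argument) and your uniform-in-$n$ tail control of the sum over $\ell$ are refinements of the same argument — the paper instead takes $\bar{\beta}_{\ell,t}=\sup_{x\in\Omega_n}\beta_{\ell,t}(x)$ globally — rather than a different approach.
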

\begin{proof}
	We have
	\begin{align*}
		|Z_n(t)- Z(t) | & \le |Z_n(0) -z_0| + \Big|Z_n(t) -Z_n(0) -\int_0^tF_s(Z_n(s))ds\Big| \\
		&  + \Big|\int_0^t F_s(Z_n(s)) -F_s(Z(s))ds\Big|.
	\end{align*}
	From \eqref{eqn:lip}, we obtain
	\begin{align*}
		\Big|\int_0^t F_s(Z_n(s)) -  F_s(Z(s))ds\Big| 
		& \le \int_0^t |F_s(Z_n(s)) -F_s(Z(s))|ds \\
		& \le M \int_0^t  |Z_n(s) -Z(s)|ds .
	\end{align*}
	Define
	\begin{equation*}
		\epsilon_n(t) := \sup_{u\le t}  \Big|Z_n(u) -Z_n(0) -\int_0^uF_s(Z_n(s))ds\Big|,
	\end{equation*}
	which therefore yields
	\begin{equation*}
		|Z_n(t)- Z(t) | \le |Z_n(0) -z_0|+ \epsilon_n(t) + M \int_0^t  |Z_n(s) -Z(s)|ds .
	\end{equation*}
	Applying Gronwall's inequality then renders
	\begin{equation*}
		|Z_n(t)- Z(t) | \le (|Z_n(0) -z_0| + \epsilon_n(t))e^{Mt}.
	\end{equation*}
	Hence, we know that \eqref{eqn:main_conv} holds if $\lim_{n\rightarrow \infty} \epsilon_n(t) =0$.
	
	Meanwhile, from \eqref{eqn:scaled_int_rep}, we have
	\begin{equation*}
		\epsilon_n(t)  \le \sum_{\ell }  \frac{|\ell|}{n}\sup_{u\le t} | \bar{W}_\ell (n \bar{\beta}_{\ell,u}u) | ,
	\end{equation*}
	where $\bar{\beta}_{\ell, t} = \sup_{x\in \Omega_n} \beta_{\ell, t}(x)$.
	Furthermore, from the definition of $\bar{W}$, we obtain
	\begin{align*}
		\sup_{u\le t} | \bar{W} (n \bar{\beta}_{\ell,u}u)| &\le \sup_{u\le t}| W_\ell (n \bar{\beta}_{\ell,u}u) +  (n \bar{\beta}_{\ell,u}u)| \\
		& =  W_\ell (n \bar{\beta}_{\ell,t}t) +  (n \bar{\beta}_{\ell,t}t),
	\end{align*}
	where the equality is due to the monotonicity of the Poisson process.
	Hence,
	\begin{equation*}
		\epsilon_n(t) \le \sum_{\ell}\frac{|\ell|}{n} \big( W_\ell (n \bar{\beta}_{\ell,t}t) +  (n \bar{\beta}_{\ell,t}t) \big).
	\end{equation*}
	From the law of large numbers for the Poisson process, we can easily conclude that $\epsilon_n(t)$ is bounded by a constant.
	We then can apply the dominated convergence theorem, in conjunction with \eqref{eqn:centered}, to ensure that $\lim_{n\rightarrow \infty} \epsilon_n(t) =0$, a.s.
\end{proof}

From Theorem~\ref{thm:Kurtz-new2}, we then have that the stochastic process $Z_n(t)$ converges to a corresponding continuous-space
deterministic process $Z(t)$ a.s.\ as $n\rightarrow \infty$ and that $Z(t)$ satisfies a corresponding set of ODEs.
In particular, the process $Z(t)$ satisfies the integral form of the general nonautonomous dynamical system given in \eqref{eq:Z-dynamics} where
the specific details of the process and the corresponding set of ODEs depend upon $F_s(\cdot)$ for the original stochastic process $\hat{Z}_n(t)$.
As one such example, in the context of viral propagation, the stochastic process $Z_n(t)$ converges to a deterministic process $Z(t)=(X(t), Y(t))$
a.s.\ as $n\rightarrow \infty$ with $Z(t)$ satisfying the following pair of ODEs:
\begin{equation}
\frac{dX(t)}{dt}  = -\lambda(t) X(t) Y(t) + \mu(t) Y(t) , \qquad
\frac{dY(t)}{dt}  = \lambda(t) X(t) Y(t) - \mu(t) Y(t) .
\label{eq:dXY}
\end{equation}
This desired a.s.\ convergence result justifies the use of a continuous-state nonautonomous dynamical system to model a discrete-state real-world stochastic system.

\subsection{Mean-Field Analysis of Optimal Control}
We next turn our attention to an optimal control problem associated with the original general class of density-dependent stochastic population processes,
where our goal is to show that this control process is asymptotically equivalent to the optimal control of the corresponding set of ODEs as the population
size tends to infinity under a mean-field scaling.

Consider a sequence of controlled Markov processes $\hat{Z}_n(t)$, with the adaptive control process $u_n(t)$ that is realized with respect to the adaptive transition kernel $n \beta_{\ell,t}(k/n)$, $k,k+\ell \in \hat{\Omega}_n$,
recalling $\beta_{\ell,t}(\cdot)$ is continuous in $t$.
For each system indexed by $n$, the optimal control $u^*_n(t)$ is determined by solving the optimal control problem with respect to
the cost functions $c_1(\cdot)$ and $c_2(\cdot)$:
\begin{align*}
	\hat{J}^*_n(z) = &\min_{u_n(t)} \quad \hat{J}_n(z) \\
	= &\min_{u_n(t)} \quad \left\{ \int_0^T  c_1(\hat{Z}_n(t), u_n(t) ) dt + c_2(\hat{Z}_n(T))\right\}, \\
	&\mbox{ s.t. } \quad \hat{Z}_n(0) = z.  
\end{align*}
Here we assume the cost functions $c_1(z,u)$ and $c_2(z)$ are uniformly bounded, which is reasonable and justified by our interest in costs related only to the proportion of a population.
Recall the integral representation of $\hat{Z}_n(t)$ and $Z_n(t)$ in \eqref{eqn:int_rep} and \eqref{eqn:scaled_int_rep}, respectively.
Further recall that the generator $A_n(t)$ for $Z_n(t)$ has the form given in \eqref{eqn:gen_Z}.

For comparison towards our goal in this section, we also consider the corresponding optimal control problem associated with the limiting mean-field dynamical system of the previous section.
Namely, the optimal control $u^*(t)$ is determined by solving the corresponding optimal control problem with respect to the same cost functions $c_1(\cdot)$ and $c_2(\cdot)$,
which can be formulated as
\begin{align*}
	J^*(z) & = \min_{u_n(t)} \quad J(z) \\
	& = \min_{u_n(t)} \quad \left\{ \int_0^T  c_1(Z(t), u(t) ) dt + c_2(Z(T))\right\} , \\
	\mbox{ s.t. } & \quad Z(0) = z ,
\end{align*}
where $Z(t)$ follows the dynamics
\begin{align*}
	Z(t) = z + \int_0^t F_s(Z(s)) ds .
\end{align*}
Note that the function $F_s(\cdot)$ encodes the control information. 

We seek to show that the optimal control $u^*(t)$ in the limiting mean-field dynamical system provides an asymptotically equivalent optimal control $u^*_n(t)$
for the original system indexed by $n$ in the limit as $n$ tends toward infinity.
More specifically, we first establish the following main result.
\begin{theorem}\label{thm:control1}
	Let $\hat{Z}_n(t)$, $Z_n(t)$ and $Z(t)$ be as above.
	We then have
	\begin{align}
		\label{eqn:main}
		\lim_{n\rightarrow \infty} \hat{J}^*_n(z) = J^*(z).
	\end{align}
Furthermore, let $F_s^*(\cdot)$ denote the function that encodes the optimal control $u^*(t)$ of the limiting mean-field dynamical system.
Suppose the original stochastic process $\hat{Z}_n(t)$ follows the deterministic state-dependent control policy determined by $F_s^*(\cdot)$.
Then, asymptotically as $n\rightarrow\infty$ under a mean-field scaling, both systems will realize the same objective function value in \eqref{eqn:main}.
\end{theorem}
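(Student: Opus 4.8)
The plan is to prove \eqref{eqn:main} by establishing the two inequalities $\limsup_{n\to\infty}\hat{J}^*_n(z)\le J^*(z)$ and $\liminf_{n\to\infty}\hat{J}^*_n(z)\ge J^*(z)$, and then to observe that the construction used for the first inequality simultaneously yields the ``furthermore'' claim. Throughout, the cost of the $n$-th system is read off the scaled process $Z_n(t)=\hat{Z}_n(t)/n$, consistent with the stated assumption that costs concern only population proportions; admissible controls are taken to have values in a fixed compact set $U$; and, beyond the stated uniform boundedness, $c_1(z,u)$ and $c_2(z)$ are assumed continuous.

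For the upper bound, let $F_s^*(\cdot)$ encode the optimal control $u^*(\cdot)$ of the limiting problem and let $Z^*(t)=z+\int_0^t F_s^*(Z^*(s))\,ds$ be the corresponding optimal trajectory, so that $J^*(z)=\int_0^T c_1(Z^*(t),u^*(t))\,dt+c_2(Z^*(T))$. Drive $\hat{Z}_n(t)$ by the deterministic state-dependent policy determined by $F_s^*(\cdot)$; then $Z_n$ obeys \eqref{eqn:scaled_int_rep} with $F_s=F_s^*$, and, under the standing regularity assumptions on this policy (so that \eqref{eqn:lip} holds for $F^*$) together with $Z_n(0)\to z$, Theorem~\ref{thm:Kurtz-new2} gives $\sup_{s\le T}|Z_n(s)-Z^*(s)|\to 0$ a.s. Continuity and uniform boundedness of $c_1$ and $c_2$, together with continuity of the optimal feedback, then yield via dominated convergence $\ex\big[\int_0^T c_1(Z_n(t),u^*(t))\,dt+c_2(Z_n(T))\big]\to J^*(z)$; since $\hat{J}^*_n(z)$ is the infimum over admissible controls, it is at most this quantity, whence $\limsup_{n\to\infty}\hat{J}^*_n(z)\le J^*(z)$. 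The same computation shows that, under the policy encoded by $F_s^*$, the objective value of the $n$-th system converges to $J^*(z)$, which by \eqref{eqn:main} equals $\lim_{n\to\infty}\hat{J}^*_n(z)$; this is precisely the ``furthermore'' statement.

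For the lower bound we adapt the strategy of \cite{GaGaBo12} to the time-varying setting. Fix $\varepsilon>0$ and, for each $n$, let $u_n(\cdot)$ be an $\varepsilon$-optimal control for the $n$-th system, with induced drift $F_s^{(n)}$ and scaled trajectory $Z_n$ satisfying \eqref{eqn:scaled_int_rep}. The summability hypothesis $\sum_\ell|\ell|\sup_{x\in K}\beta_{\ell,t}(x)<\infty$ bounds the absolutely continuous part of $Z_n$ uniformly on $[0,T]$, while \eqref{eqn:centered} forces the martingale part to vanish uniformly a.s.; together with compactness of $U$, this makes the family $\{(Z_n,m_n)\}_n$ relatively compact, where $m_n(ds\,du)=\delta_{u_n(s)}(du)\,ds$ is the relaxed occupation measure of the control. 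Along a convergent subsequence with limit $(Z,m)$, disintegrate $m(ds\,du)=m_s(du)\,ds$; a martingale-problem argument (cf.\ \cite[Chapter 4]{EthKur86}) built on the generator \eqref{eqn:gen_Z}, in the spirit of the proof of Theorem~\ref{thm:Kurtz-new2}, shows that $Z$ is a.s.\ an admissible relaxed trajectory of the limiting system, $\dot{Z}(s)=\int_U F_s^{u}(Z(s))\,m_s(du)$ with $Z(0)=z$. By joint continuity of $c_1$ the objective is continuous along this convergence, so the subsequential limit of the $n$-th objective equals $\int_0^T\!\int_U c_1(Z(s),u)\,m_s(du)\,ds+c_2(Z(T))$, which is a.s.\ at least $J^*(z)$ by optimality of $J^*$ over admissible trajectories; taking expectations and then letting $\varepsilon\downarrow 0$ gives $\liminf_{n\to\infty}\hat{J}^*_n(z)\ge J^*(z)$.

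The main obstacle is this lower bound --- specifically the tightness and limit-identification for the controlled processes and the passage through relaxed controls. One must guarantee that relaxation does not decrease the optimal value, which holds when the achievable-drift set $\{F_s^{u}(x):u\in U\}$ is convex for each $(s,x)$ or, more generally, after a standard relaxation of both the finite and limiting control problems so that the relaxed value still coincides with $J^*(z)$. The remaining ingredients --- continuity and uniform boundedness of the costs, the uniform drift bound, and the vanishing of the martingale term via \eqref{eqn:centered} --- are exactly those already employed in the proof of Theorem~\ref{thm:Kurtz-new2}.
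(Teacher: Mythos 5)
Your proof is essentially correct at the paper's level of rigor, and its first half is the paper's own argument: apply the (near-)optimal deterministic policy of the limiting problem to the $n$-th system, invoke Theorem~\ref{thm:Kurtz-new2} together with the uniform boundedness (and, as you rightly add, continuity) of $c_1,c_2$ and dominated convergence to get $\hat{J}_n(z)\to J(z)$, hence $\limsup_{n\to\infty}\hat{J}^*_n(z)\le J^*(z)$ and, simultaneously, the ``furthermore'' claim. Where you genuinely diverge is the lower bound. The paper does not use relaxed controls or weak-convergence compactness at all: for each $n$ it takes an $\epsilon$-optimal control of the $n$-th problem, and on each sample path introduces the auxiliary deterministic trajectory $\tilde{Z}_n$ of \eqref{eqn:aux}, driven by the same realized drift $F_s$ appearing in \eqref{eqn:scaled_int_rep}; the Poisson (martingale) representation plus Gronwall's inequality give that the scaled controlled process and $\tilde{Z}_n$ are $O(1/n)$-close in expectation, and since each path of $\tilde{Z}_n$ is an admissible trajectory of the limiting problem its cost is at least $J^*(z)$, which yields $\liminf_{n\to\infty}\hat{J}^*_n(z)\ge J^*(z)$ with a quantitative rate and no tightness or convexity hypotheses. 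Your route instead goes through occupation measures, tightness, and martingale-problem identification of subsequential limits as relaxed trajectories; this is a standard and robust scheme, but it is heavier machinery, gives no rate, and requires extra structure the paper never invokes --- compactness of the control set and, above all, that relaxation does not lower the value (convexity of the achievable-drift set, or a chattering/density argument equating the relaxed and ordinary values), which you correctly flag as the crux. To be fair, the paper's coupling argument buys its simplicity by implicitly assuming that the realized, sample-path-dependent drift of any admissible prelimit control is itself an admissible drift for the limiting problem --- essentially the same admissibility-matching issue you confront explicitly via relaxation --- so neither treatment is fully airtight as written; if you keep your version, state the relaxation/value-equality hypothesis as an explicit assumption (or prove it under the stated continuity and Lipschitz conditions), and note that the paper's pathwise construction is an alternative that avoids it.
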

\begin{proof}
	We first want to show that
	\begin{align*}
		{\overline \lim}_{n\rightarrow \infty} \hat{J}^*_n(z) \; \le \; J^*(z) \; \le \; {\underline \lim}_{n\rightarrow \infty} \hat{J}^*_n(z).
	\end{align*}
	Given any $\e>0$, there exists an $F_s(z)$ such that $J(z) > J^*(z) -\e$ under $F_s(z)$ by definition.
	Now, consider a system indexed by $n$ that follows the deterministic policy determined by $F_s(z)$.
	From our mean field analysis in the previous section, we know
	\begin{align*}
		\lim_{n\rightarrow \infty} {\hat Z}_n(t) =Z(t), \qquad a.s.,
	\end{align*}
	since a fixed deterministic policy will be followed by both the system indexed by $n$ and the limiting dynamical system.
	In addition, because both $\hat{Z}(t)$ and $Z(t)$ are uniformly bounded and
	$c_1(\cdot)$ and $c_2(\cdot)$
	are uniformly bounded functions, we have
	\begin{align*}
		\lim_{n\rightarrow \infty} \hat{J}_n(z) = J(z) ,
	\end{align*}
	and therefore
	\begin{align*}
		{\underline \lim}_{n\rightarrow \infty} \hat{J}^*_n(z) \; \ge \; \lim_{n\rightarrow \infty} \hat{J}_n(z) \; = \; J(z) \; > \; J^*(z) -\e.
	\end{align*}
	
	Meanwhile, for each system indexed by $n$, we have a $Z_n$ under which $\hat{J}^*_n(z) \le \hat{J}_n(z)+\e$.
	Let $F_s(z)$, dependent on $n$, be as in the last term of \eqref{eqn:scaled_int_rep}.
	Given any sample path $\om$, define
	\begin{align}
		\label{eqn:aux}
		{\tilde Z}_n(t) := z + \int_0^t F_s({\tilde Z}_n(s))ds ,
	\end{align}
	where $F_s$ is different for different sample paths.
	Furthermore, define
	\begin{align*}
		{\tilde J}_n(z) :=  \ex\left[\int_0^T  c_1({\tilde Z}_n(t), u(t) ) dt + c_2({\tilde Z}_n(T))\right].
	\end{align*}
	We know that
	\begin{align*}
		{\overline \lim}_{n\rightarrow \infty} \tilde{J}_n(z) \; \le \; \hat{J}^*(z) .
	\end{align*}
	
	What remains is to determine an estimate of $|\hat{J}_n(z) - \tilde{J}_n(z)|$, for which we simply need to estimate
	\begin{align*}
		\ex[|\hat{Z}_n(t)-{\tilde Z}_n(t)|].
	\end{align*}
	From the martingale problem representation and equation \eqref{eqn:aux}, we can apply Gronwall's inequality and thus obtain
	\begin{align*}
		\ex[|\hat{Z}_n(t)-\tilde{Z}_n(t)|]& \le \ex\left[\sum_\ell  \frac{\ell}{n} \sup_{0\le s\le t}\bar{W}_\ell (A_n(s))\right]\exp[Bt]
	\end{align*}
	for some constant $B$. This implies that $\ex[|\hat{Z}_n(t)-\tilde{Z}_n(t)|]$ is a $O(1/n)$ term. Hence, we have
	\begin{align*}
		{\overline \lim}_{n\rightarrow \infty} \hat{J}_n(z) \; \le \; \hat{J}^*(z) +\e .
	\end{align*}
	The above arguments then lead to the desired result in \eqref{eqn:main}.

Finally, it is readily verified that the above result and arguments render the desired conclusion that the optimal control $u^*(t)$ in the limiting
mean-field dynamical system provides an asymptotically equivalent optimal control $u^*_n(t)$ for the original stochastic system indexed by $n$ in the
limit as $n \rightarrow \infty$.
\end{proof}

\subsection{Alternative Proof of Mean-Field Limit: Special Case}
We now revisit the special case of the viral-propagation processes of Section~\ref{sec:math-model},
in light of the recent alternative proof of the mean-field limit of such processes with fixed infection and cure rate parameters~\cite{ArmBec16,Armb16}.
Our goal is to generalize these results and extend these arguments to handle the case of time-varying infection and cure rate parameters,
where the technical details are unique to the corresponding time-varying systems.

Consider a sequence of Markov processes $$\hat{Z}_n = \{ (\hat{X}_n(t), \hat{Y}_n(t)) ; t \geq 0 \}$$ indexed by the total population size
$n \in \Ints^+$
and defined over the probability space $(\hat{\Omega}_n, \cF_n, \pr_n)$,
composed of the state space $$\hat{\Omega}_n := \{ (i,j) : 0 \leq i, j \leq n, i+j =n \},$$
$\sigma$-algebra $\cF_n$ and probability measure $\pr_n$, with initial probability distribution $\bm\alpha_n$.
Each process $\hat{Z}_n(t)$ represents the ordered pair $(\hat{X}_n(t),\hat{Y}_n(t))$ of non-infected and infected
population at time $t$, respectively, where we assume connections among the population form a complete graph.
The time-dependent infinitesimal generator $Q_n(t) = [q^{(n)}_{(i,j),(u,v)}(t)]$ for the Markov process $\hat{Z}_n$ 
has transition intensities given by~\eqref{eq:VP:q} in terms of the time-varying infection rate $\lambda(t)$
and cure rate $\mu(t)$, both of which are assumed throughout to be continuous in $t$.

Recalling the definition $Z_n(t) := \hat{Z}_n(t)/n$ over the state space
$$\Omega_n := \left\{ \left(\frac{i}{n},\frac{j}{n}\right) : 0 \leq i, j \leq n, i+j =n \right\},$$
we seek to show that the stochastic process $Z_n(t)$ converges to a deterministic process
$Z(t)=(X(t), Y(t))$ a.s.\ as $n\rightarrow \infty$ and that $Z(t)$ satisfies the pair of ODEs in~\eqref{eq:dXY}.
This desired a.s.\ convergence result is a process-level limit.
We view both the pre-limit and limit processes as elements of $D([0,\infty),[0,\infty))$, the space
of functions mapping from $[0,\infty)$ to $[0,\infty)$ that are right-continuous and have left limits (RCLL).
This space is endowed with the Skorohod $J_1$ topology~\cite{Whitt02}.
In particular, let $\Phi_m$ denote the class of strictly increasing, continuous mappings $\phi:[0,m]\rightarrow[0,m]$
such that $\phi(0)=0$ and $\phi(m)=m$. 
For $x,y\in D([0,\infty),[0,\infty))$, define
\begin{align*}
d_m(x,y) & :=\inf_{\phi\in\Phi_m}\{||\phi-e||_m\vee||x\circ\phi-y||_m\}, \\
d(x,y) & :=\sum_{m=1}^\infty 2^{-m}[d_m(x,y)\wedge 1] ,
\end{align*}
where $e(t)=t$ is the identity function.
Then the metric $d$ is the Skorohod $J_1$ metric in $D([0,\infty),[0,\infty))$.
Our convergence result states that $d(Z_n(\cdot),Z(\cdot))\rightarrow 0$ a.s.\ as $n\rightarrow\infty$.

The desired result for the above class of viral-propagation processes can be formally expressed by the following Theorem.
\begin{theorem}
The stochastic process $\hat{Z}_n(t)$ defined above converges a.s.\ as $n\rightarrow \infty$ to the deterministic process $Z(t)=(X(t), Y(t))$ such that
\begin{equation*}
\dot{X} = -\g(t) X(t) Y(t)+ \mu(t) Y(t) , \qquad \dot{Y} = \g(t) X(t) Y(t)-\mu(t) Y(t) .
\end{equation*}
Namely, $d(Z_n(\cdot),Z(\cdot))\rightarrow 0$ a.s.\ as $n\rightarrow\infty$.
\label{thm:Kurtz-new}
\end{theorem}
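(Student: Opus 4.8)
The plan is to exploit the conservation law and reduce the two-dimensional problem to a single scalar jump process, then run, in this concrete setting, the elementary Poisson-representation argument underlying Theorem~\ref{thm:Kurtz-new2}. Since $i+j=n$ holds for every state of $\hat{Z}_n$, we have $\hat{X}_n(t)=n-\hat{Y}_n(t)$ for all $t$, hence $Z_n(t)=(1-Y_n(t),\,Y_n(t))$ with $Y_n(t):=\hat{Y}_n(t)/n\in[0,1]$; thus it suffices to prove $d(Y_n(\cdot),Y(\cdot))\to 0$ a.s., where $Y$ solves $\dot{Y}=\lambda(t)(1-Y)Y-\mu(t)Y$ with $Y(0)=y_0:=\lim_n Y_n(0)$ (the latter limit existing a.s., as in Theorem~\ref{thm:Kurtz-new2}). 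First I would record the well-posedness of this limiting ODE on each interval $[0,T]$: the field $f_s(y):=\lambda(s)(1-y)y-\mu(s)y$ is continuous in $s$ (since $\lambda,\mu$ are) and Lipschitz in $y$ on $[0,1]$ with a constant $L_T$ depending only on $\bar\lambda_T:=\sup_{[0,T]}\lambda$ and $\bar\mu_T:=\sup_{[0,T]}\mu$, both finite by continuity; moreover $[0,1]$ is forward invariant because $f_s(0)=0$ and $f_s(1)=-\mu(s)\le 0$. Hence $Y$ exists, is unique, and stays in $[0,1]$ on $[0,T]$.

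Next I would write the time-changed Poisson representation of the scalar chain on a common probability space. Using the intensities in \eqref{eq:VP:q} --- up-jumps $(i,j)\to(i-1,j+1)$ at rate $\lambda(t)\,i\,(j/n)$ and down-jumps $(i,j)\to(i+1,j-1)$ at rate $\mu(t)\,j$ --- together with two independent standard Poisson processes $W_\uparrow,W_\downarrow$, we obtain
\begin{equation*}
\hat{Y}_n(t)=\hat{Y}_n(0)+W_\uparrow\!\Big(\int_0^t \lambda(s)\big(n-\hat{Y}_n(s)\big)\tfrac{\hat{Y}_n(s)}{n}\,ds\Big)-W_\downarrow\!\Big(\int_0^t \mu(s)\hat{Y}_n(s)\,ds\Big),
\end{equation*}
and, dividing by $n$ and centering via $\bar{W}(x)=W(x)-x$,
\begin{equation*}
Y_n(t)=Y_n(0)+\int_0^t f_s(Y_n(s))\,ds+\mathcal{E}_n(t),\qquad
\mathcal{E}_n(t):=\tfrac1n\bar{W}_\uparrow\!\big(nA_n(t)\big)-\tfrac1n\bar{W}_\downarrow\!\big(nB_n(t)\big),
\end{equation*}
where $A_n(t):=\int_0^t\lambda(s)(1-Y_n(s))Y_n(s)\,ds$ and $B_n(t):=\int_0^t\mu(s)Y_n(s)\,ds$. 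Because the state space forces $Y_n(s)\in[0,1]$ for every $s$, we have the deterministic a priori bounds $0\le A_n(t)\le \bar\lambda_T T$ and $0\le B_n(t)\le \bar\mu_T T$ on $[0,T]$, so
\begin{equation*}
\sup_{t\le T}|\mathcal{E}_n(t)|\ \le\ \sup_{u\le \bar\lambda_T T}\Big|\tfrac1n\bar{W}_\uparrow(nu)\Big|+\sup_{u\le \bar\mu_T T}\Big|\tfrac1n\bar{W}_\downarrow(nu)\Big|\ \longrightarrow\ 0 \qquad a.s.
\end{equation*}
by the strong law \eqref{eqn:centered} applied to each of the two (finitely many) terms.

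The estimate is then closed as in Theorem~\ref{thm:Kurtz-new2}: subtracting the integral equations for $Y_n$ and $Y$ and using the Lipschitz bound for $f_s$ on $[0,1]$,
\begin{equation*}
|Y_n(t)-Y(t)|\ \le\ |Y_n(0)-y_0|+\sup_{s\le T}|\mathcal{E}_n(s)|+L_T\int_0^t|Y_n(s)-Y(s)|\,ds ,
\end{equation*}
and Gronwall's inequality yields $\sup_{t\le T}|Y_n(t)-Y(t)|\le\big(|Y_n(0)-y_0|+\sup_{s\le T}|\mathcal{E}_n(s)|\big)e^{L_T T}\to 0$ a.s. Since this holds on every compact interval, taking $\phi=e$ in the definition of $d_m$ gives $d_m(Y_n,Y)\le\sup_{t\le m}|Y_n(t)-Y(t)|\to 0$ a.s.\ for each $m$, whence $d(Y_n(\cdot),Y(\cdot))=\sum_{m\ge 1}2^{-m}[d_m(Y_n,Y)\wedge 1]\to 0$ a.s.\ by dominated convergence over $m$. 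Applying the same bound to $X_n=1-Y_n$ gives $d(Z_n(\cdot),Z(\cdot))\to 0$ a.s., which is the assertion, and passing to the limit in the integral form recovers the stated pair of ODEs.

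I expect the one genuinely delicate point to be the transfer step for $\mathcal{E}_n$: the arguments $nA_n(t)$ and $nB_n(t)$ of the Poisson processes are path-dependent random quantities, so \eqref{eqn:centered}, which is stated for deterministic arguments, does not apply verbatim. The resolution is to replace these random time changes by their deterministic upper envelopes $\bar\lambda_T T$ and $\bar\mu_T T$ before invoking the uniform-in-argument law --- a step that is legitimate precisely because conservation of the total population pins $Y_n$ inside $[0,1]$ and because continuity of $\lambda$ and $\mu$ makes those envelopes finite. This is also essentially the only place where the time-varying nature of the rates enters, forcing the suprema over $[0,T]$ in place of the constants used in the fixed-rate model of \cite{ArmBec16,Armb16}; the remaining steps are routine.
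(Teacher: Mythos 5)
Your argument is correct, but it follows a genuinely different route from the paper's. You specialize the Kurtz-style machinery of Theorem~\ref{thm:Kurtz-new2} to the scalar process $Y_n$: Poisson random time-change representation, centering, a deterministic envelope ($Y_n\in[0,1]$, $\lambda,\mu$ continuous hence bounded on $[0,T]$) to control the random arguments before invoking \eqref{eqn:centered}, and Gronwall to close the estimate; this yields a.s.\ uniform convergence on compacts and hence convergence in the Skorohod metric $d$, exactly matching the statement. The paper instead deliberately gives an \emph{alternative}, moment-based proof in the spirit of \cite{ArmBec16,Armb16} extended to time-varying rates: it derives the evolution of $\ex[\hat Y_n(t)]$ (Lemma~\ref{lem:uncond}), sandwiches $\ex[Y_n(t)]$ between $Y(t)$ from above (Lemma~\ref{lem:upper_bound}, via Jensen and the comparison Theorem~\ref{thm:comparison}) and the auxiliary $z_n(t)$ from below (Lemma~\ref{lem:lower_bound}, using a second-moment bound $w_n$), and then shows $(z_n,w_n)\to(Y,Y^2)$ uniformly, giving the uniform mean-square convergence \eqref{meansquareconv}. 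What each buys: your route is shorter, reuses the general theorem, handles the time-varying rates with only the suprema $\bar\lambda_T,\bar\mu_T$, and delivers the a.s.\ Skorohod-metric conclusion directly (the paper's mean-square statement \eqref{meansquareconv} does not by itself imply the a.s.\ claim without an extra step); the paper's route avoids the Poisson-coupling machinery altogether and shows that the elementary ODE-comparison argument of Armbruster--Beck survives time dependence, which is the stated purpose of that section. Two small points to tighten in your write-up: state explicitly the hypothesis on initial conditions (the paper assumes $Y_n(0)=y_0$ for all $n$; your weaker $Y_n(0)\to y_0$ a.s.\ is fine but should be declared as an assumption rather than inferred), and note that the envelope step uses that the compositions $t\mapsto \bar W_{\uparrow}(nA_n(t))$ take values in the set $\{\bar W_{\uparrow}(nu):u\in[0,\bar\lambda_T T]\}$ pathwise, which is exactly why the deterministic-argument law \eqref{eqn:centered} suffices.
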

\begin{proof}
We proceed by focusing on the convergence of $Y_n(t):=\hat{Y}_n(t)/n$, which is sufficient to ensure the convergence of $Z_n(t)$ since $X_n(t)+ Y_n(t)=1$ with $X_n(t):=\hat{X}_n(t)/n$.
Suppose $Y_n(0)=y_0\in [0,1]$ for all $n$.
We show that, for any $T>0$,
\begin{equation}
\label{meansquareconv}
\lim_{n\rightarrow\infty}\sup_{t\in[0,T]}\ex[|Y_n(t)-Y(t)|^2]=0,
\end{equation}
where $Y(t)$ satisfies
\begin{equation}
Y'(t) = \g(t) [1-Y(t)]Y(t)-\mu(t) Y(t),\quad Y(0)=y_0.\label{ydefode}
\end{equation}

Our proof starts with
establishing an upper bound on $\ex[Y_n(t)]$, which is given in Lemma~\ref{lem:upper_bound} and makes use of Lemma~\ref{lem:uncond}, and
establishing a lower bound on $\ex[Y_n(t)]$, which is given in Lemma~\ref{lem:lower_bound}.
%
%
The next step is to show that the $z_n(t)$ process, defined by \eqref{ztdef} in Lemma~\ref{lem:lower_bound} together with $w_n(t)$ in \eqref{wtdef},
converges to the $Y(t)$ process uniformly in mean square as $n\rightarrow\infty$, in the sense of \eqref{meansquareconv}.
Consider a two-dimensional ODE system with a similar form as follows:
\begin{align}
\mathbf{v}(t):=(v_1(t),v_2(t)),\notag\\
v'_1(t)=\lambda(t)(v_1(t)-v_2(t))-\mu(t)v_1(t),\notag\\
v'_2(t)=2\lambda(t)(v_2(t)-v_2(t)^{1.5})-2\mu(t)v_2(t),\label{secondode}\\
v_1(0)=y_0, v_2(0)=y_0^2.\notag
\end{align}
Note that $\mathbf{v}(t)=(Y(t),Y(t)^2)$ is the unique solution to the above system of differential equations.
Moreover, as $n\rightarrow\infty$, the right hand side of \eqref{wtdef} converges to the right hand side of \eqref{secondode} if $\lim_{n\rightarrow\infty}w_n(t)=v_2(t)$.

Meanwhile, we know from \eqref{wtdef} that
\begin{align*}
w_n(t) = & y_0^2+ \int_0^t\g(s) [w_n(s) -w_n(s)^{1.5}] -2\mu(s) w_n(s) + \frac{1}{n} [\g(s)+\mu(s)]ds.
\end{align*}
We further know that the function $x-x^{1.5}$ has a maximum value of $4/9$ for $x\ge 0$, and $w_n(t) \ge 0$.
Hence, for a fixed $T$, we have, for any $t\in [0,T]$,
\begin{align*}
w_n(t) \le  y_0^2+ \int_0^t \Lambda_T \frac{4}{9} +\Lambda_T  + M_T ds = \bigg(\frac{13}{9} \Lambda_T+M_T\bigg) T
\end{align*}
where $\Lambda_T =\sup_{t\le T} \g(t) $ and $M_T= \sup_{t\le T} \mu(t)$.
This means that $|w_n(t)|$ is uniformly bounded.
In conjunction with \eqref{wtdef}, it follows that $\lim_{n\rightarrow\infty}w_n(t)=v_2(t)$.

Hence, $(z_n(t),w_n(t))$ converges to $(Y(t),Y(t)^2)$ uniformly on $[0,T]$ for any $T>0$.
\end{proof}

\section{Dynamical Systems}
\label{sec:limit} 
The limiting continuous-space deterministic process, as previously noted above, satisfies the integral form of the general nonautonomous dynamical system
in \eqref{eq:Z-dynamics}, where the specific details of the process and the corresponding set of ODEs depend upon $F_s(\cdot)$ for the original stochastic
process $\hat{Z}_n(t)$ and where the parameter $n$ has different interpretations depending upon such details of the original process.
We therefore primarily consider in this section one specific dynamical system, namely the deterministic process $z(t)=(x(t),y(t))$ resulting from Theorem~\ref{thm:Kurtz-new}.
At the end of this section, we discuss applications of our approach to address other types of dynamical systems.

\subsection{Model}
The results of Section~\ref{sec:stochastic} yield a corresponding continuous-time, continuous-state nonautonomous dynamical system
$(x(t),y(t))$, where $x(t)$ denotes the fraction of non-infected population
at time $t$ and $y(t)$ the fraction of infected population
at time $t$.
The starting state state $(x(0),y(0))$ of the system at time $t=0$ has initial probability distribution $\bm\alpha$.
Let $\lambda(t)$ denote the infection rate at time $t$ and $\mu(t)$ the cure rate at time $t$, for $t \in [0,T)$, where the
planning horizon $T$ can be finite or infinite.
We assume throughout that $\lambda(t), \mu(t) > 0$.

To elucidate the exposition, let us initially assume the infection rate $\lambda(t) > 0$ and cure rate $\mu(t) > 0$ are constant for all $t$;
namely, $\lambda(t) = \lambda$ and $\mu(t) = \mu$, $\forall t$.
The state equations are then given by:
\begin{equation*}
\frac{dx}{dt} = -\lambda x y + \mu y , \qquad\qquad
\frac{dy}{dt} = \lambda x y - \mu y ,
\end{equation*}
where $x$ and $y$ respectively describe the non-infected and infected population, 
with total population $c = x+y$.
Although our model definition implies $c=1$, we shall consider the case of general $c$ for mathematical completeness.

The dynamical system model defined above is continuously varying in time.
Within our mathematical framework, we also consider a more general model consisting of multiple regimes, each as defined above,
where there are jumps (positive or negative) in the state of the dynamical system and in the infection and cure rate functions
upon switching from one regime to another.
Assuming the length of each regime is sufficiently long to reach equilibrium before regime switching occurs
(a simple statement of differences in time-scale), without loss of generality, we can focus our mathematical analysis
on each regime in isolation where the equilibrium point for any regime becomes the starting point for the next regime.

Since $c = x+y$ and $\frac{d(x+y)}{dt} = 0$, we have $x(t)+y(t)= c = x(0)+y(0)$ for all $t$; i.e., the total population is constant.
Upon substituting $y = c-x$, we can equivalently rewrite the two-dimensional ODE as an one-dimensional ODE:
\begin{equation*}
\frac{dx}{dt} = \lambda x^2 - (\lambda c+\mu)x + \mu c .
\end{equation*}
We can then apply standard techniques to analyze this dynamical system and obtain the following result.
Note that the logistic growth model described in Section \ref{sec:math-model} and in \cite[Chapter 11]{EthKur86} also resulted in a one-dimensional ODE and amenable to a similar analysis.
\begin{theorem}
	For the dynamical system $(x(t),y(t))$ with $0 \leq x(0), y(0) \leq c$ and $x(t)+y(t)= c, \; \lambda(t) = \lambda, \;  \mu(t) = \mu$ for all $t$,
	the system has equilibrium points at $x_1^*=\frac{\mu}{\lambda}$ and $x_2^*=c$, and stability properties given by the three cases:
	\begin{enumerate}
		\item $\frac{\mu}{\lambda} < c$: The equilibrium point $x_1^*$ is stable and the equilibrium point $x_2^*$ is unstable.
		Moreover, all trajectories of the dynamical system will converge towards the equilibrium point $x_1^*$,
		with the sole exception of the initial state $x(0) = c$.
		\item $\frac{\mu}{\lambda} > c$: The equilibrium point $x_1^*$ is unstable and the equilibrium point $x_2^*$ is stable.
		Moreover, all trajectories of the dynamical system will converge towards the equilibrium point $x_2^*$.
		\item $\frac{\mu}{\lambda} = c$: There is one equilibrium point at $x_2^*$, which is neither stable nor unstable.
		Moreover, all trajectories of the dynamical system will converge towards the equilibrium point $x_2^*$.
	\end{enumerate}
	\label{thm:CTCS-fixed}
\end{theorem}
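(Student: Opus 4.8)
The plan is to carry out the entire analysis on the reduced scalar autonomous ODE $\dot{x} = f(x)$ with $f(x) \eqdef \lambda x^2 - (\lambda c+\mu) x + \mu c$, which was obtained above by substituting $y = c-x$; the assertions about the pair $(x(t),y(t))$ then follow immediately from the affine relation $y = c-x$, which sends $x_1^*$ to $y_1^* = c-\mu/\lambda$ and $x_2^*$ to $y_2^* = 0$ and preserves stability type. First I would observe that $f$ factors as $f(x) = \lambda\,(x-\mu/\lambda)(x-c)$, which exhibits the equilibria $x_1^* = \mu/\lambda$ and $x_2^* = c$ and shows that they coincide precisely when $\mu/\lambda = c$. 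Since $f$ is a quadratic polynomial it is locally Lipschitz, so local existence and uniqueness of solutions hold; global forward existence will come for free once the relevant interval is shown to be forward invariant and bounded.

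Next I would verify forward invariance of $[0,c]$ by evaluating $f$ at the endpoints: $f(0) = \mu c > 0$ (using $\mu, c > 0$) and $f(c) = 0$. Hence no trajectory with $x(0)\in[0,c]$ can leave $[0,c]$, so every such solution is defined and bounded on $[0,\infty)$, and $y(t) = c-x(t) \in [0,c]$ as well.

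For the local stability statements I would linearize. From $f'(x) = \lambda(2x - x_1^* - x_2^*)$ one gets $f'(x_1^*) = \lambda(x_1^* - x_2^*) = \mu - \lambda c$ and $f'(x_2^*) = \lambda(x_2^* - x_1^*) = \lambda c - \mu$. The three cases are then immediate: if $\mu/\lambda < c$ then $f'(x_1^*) < 0 < f'(x_2^*)$, so $x_1^*$ is asymptotically stable and $x_2^*$ is unstable; if $\mu/\lambda > c$ the signs reverse; and if $\mu/\lambda = c$ the two equilibria merge into a double root with $f(x) = \lambda(x-c)^2 \ge 0$, which is semi-stable (attracting from below, repelling from above) and hence neither stable nor unstable in the two-sided sense.

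For the global convergence claims I would invoke the elementary fact that a bounded, eventually monotone solution of a scalar autonomous ODE converges to an equilibrium (cf.\ Appendix~\ref{app:basicDST}), together with the sign of $f$ on the subintervals of $[0,c]$ delimited by the equilibria. In Case 1 ($\mu/\lambda < c$): $f > 0$ on $[0,\mu/\lambda)$ and $f < 0$ on $(\mu/\lambda, c)$, so every solution with $x(0)\in[0,c)$ is monotone and converges to $x_1^*$, whereas $x(0)=c$ remains at $c$; this is exactly the stated convergence with the single exception. In Cases 2 and 3 ($\mu/\lambda \ge c$): $f > 0$ on all of $[0,c)$, so every solution with $x(0)\in[0,c)$ increases monotonically to $c = x_2^*$, while $x(0)=c$ is already there; thus all trajectories converge to $x_2^*$. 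Translating back through $y = c-x$ recovers the corresponding statements for $(x(t),y(t))$. I do not anticipate a genuine obstacle here; the only points requiring a little care are confirming forward invariance of $[0,c]$ (which underpins global existence and boundedness) and correctly treating the exceptional initial condition $x(0)=c$ in Case 1, together with noting that the degenerate Case 3 requires the one-sided stability discussion rather than a naive linearization at a hyperbolic point.
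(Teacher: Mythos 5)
Your proposal is correct, and it reaches the conclusion by a genuinely more elementary route than the paper. The paper's proof computes the same two derivative values, $f'(x_1^*)=\mu-\lambda c$ and $f'(x_2^*)=\lambda c-\mu$, but then invokes the Hartman--Grobman theorem (Theorem~\ref{thm:Hartman-Grobman}) for the local stability classification and appeals to Lyapunov's second method (Theorem~\ref{thm:Lyapunov:global}) for the global convergence claims, without exhibiting an explicit Lyapunov function; notably it also cites Hartman--Grobman in case 3, where the equilibrium is non-hyperbolic ($f'(c)=0$) and that theorem does not formally apply. You instead run a self-contained phase-line argument: the factorization $f(x)=\lambda\,(x-\mu/\lambda)(x-c)$, forward invariance of $[0,c]$ from $f(0)=\mu c>0$ and the equilibrium at $x=c$, the sign of $f$ on the subintervals between equilibria, and the standard fact that a bounded monotone solution of a scalar autonomous ODE converges to a zero of $f$. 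This buys full rigor in the degenerate case $\mu/\lambda=c$ (treated directly via $f(x)=\lambda(x-c)^2\ge 0$ rather than a linearization at a non-hyperbolic point), makes the exceptional trajectory $x(0)=c$ in case 1 explicit, and avoids any unverified Lyapunov hypotheses; what the paper's phrasing buys is consistency with the Lyapunov-function machinery it reuses for the time-varying system in Theorem~\ref{thm:CTCS-vary}. One small caveat: the monotone-bounded-convergence fact you attribute to Appendix~\ref{app:basicDST} is not actually stated there (the appendix contains Hartman--Grobman, Lyapunov global stability, and the comparison theorem), so you should either cite it as standard or prove it in a line.
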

\begin{proof}
	First, we evaluate the derivative of $f$ at the two equilibrium points $x_1^*$ and $x_2^*$ to obtain
	\begin{align*}
	\frac{df(x)}{dx}|_{x=x^*_1} & = 2\lambda x^*_1-(\lambda c+\mu)=\mu-\lambda c , \\
	\frac{df(x)}{dx}|_{x=x^*_2} & = 2\lambda x^*_2-(\lambda c+\mu)=\lambda c - \mu .
	\end{align*}
	From the above equations for case $1$ and the Hartman-Grobman Theorem (Theorem~\ref{thm:Hartman-Grobman}),
	the equilibrium point $x_1^*$ is stable and the equilibrium point
	$x_2^*$ is unstable since $\frac{df(x)}{dx}|_{x=x^*_1} = \mu-\lambda c < 0$ and $\frac{df(x)}{dx}|_{x=x^*_2} = \lambda c -\mu > 0$.
	The convergence of all trajectories of the dynamical system then follows upon applying Lyapunov's second method for (global) stability
	(Theorem~\ref{thm:Lyapunov:global})
	together with the assumption $x(0), y(0) \geq 0$.
	
	Turning to the above equations under case $2$,
	the Hartman-Grobman Theorem (Theorem~\ref{thm:Hartman-Grobman}) renders that the equilibrium point $x_1^*$ is unstable and the
	equilibrium point $x_2^*$ is stable since $\frac{df(x)}{dx}|_{x=x^*_1} = \mu-\lambda c > 0$ and $\frac{df(x)}{dx}|_{x=x^*_2} = \lambda c -\mu < 0$.
	The convergence of all trajectories of the dynamical system then follows upon applying Lyapunov's second method for (global) stability
	(Theorem~\ref{thm:Lyapunov:global})
	together with the assumption $0 \leq x(0) \leq c$.
	
	Finally, from the above equations for case $3$ and the Hartman-Grobman Theorem (Theorem~\ref{thm:Hartman-Grobman}),
	there is one equilibrium point at $x_1^*=x_2^*=c$ that is neither
	stable nor unstable since $\frac{df(x)}{dx}|_{x=x^*_1} = \mu-\lambda c = 0$ and $\frac{df(x)}{dx}|_{x=x^*_2} = \lambda c -\mu = 0$.
	The convergence of all trajectories of the dynamical system then follows upon applying Lyapunov's second method for (global) stability
	(Theorem~\ref{thm:Lyapunov:global})
	together with the assumption $0 \leq x(0) \leq c$.
\end{proof}

To summarize, for the dynamical system of Theorem~\ref{thm:CTCS-fixed}, all trajectories will converge towards an equilibrium point,
which is at $x=\frac{\mu}{\lambda}$ when $\frac{\mu}{\lambda} < c$ and at $x=c$ when $\frac{\mu}{\lambda} \geq c$.
We now turn to the general instance of our dynamical system model with $\lambda(t)$ and $\mu(t)$ varying as functions of time $t$,
for which we have a more general result of a similar form.
\begin{theorem}
	For the dynamical system $(x(t),y(t))$ with $0 \leq x(0), y(0) \leq c$ and $x(t)+y(t)= c$, $\lambda(t)$, $\mu(t)$ continuously varying for all $t$,
	the system has an asymptotic state at $x_1^*(t)=\frac{\mu(t)}{\lambda(t)}$ and an equilibrium point at $x_2^*=c$, and stability properties given by the following four cases.
	\begin{enumerate}
		\item $0 < \frac{\mu(t)}{\lambda(t)} < \xi < c$, $\forall t$: The equilibrium point $x_2^*$ is unstable.
		Moreover, all trajectories of the dynamical system with initial state $x(0) < c$ will converge towards being
		eventually near the asymptotic state $x_1^*(t)$ with respect to a $\delta$-neighborhood, i.e.,
		$\|x(t) - \frac{\mu(t)}{\lambda(t)}\| \leq \delta$
		where $\delta$ is a nonnegative constant that depends on the rates of change of $\mu(t)$ and $\lambda(t)$.
		\item $\frac{\mu(t)}{\lambda(t)} > c$: The equilibrium point $x_2^*$ is stable.
		Moreover, all trajectories of the dynamical system will converge towards the equilibrium point $x_2^*$.
		\item $\frac{\mu(t)}{\lambda(t)} = c$: There is one equilibrium point at $x_2^*$, which is neither stable nor unstable.
		Moreover, all trajectories of the dynamical system will converge towards the equilibrium point $x_2^*$.
		\item $\frac{\mu(t)}{\lambda(t)} = 0$: There is one equilibrium point at $x_1^*=0$, which is neither stable nor unstable.
		Moreover, all trajectories of the dynamical system will converge towards this equilibrium point $x_1^*$.
	\end{enumerate}
	\label{thm:CTCS-vary}
\end{theorem}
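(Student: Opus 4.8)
The plan is to follow the template of the proof of Theorem~\ref{thm:CTCS-fixed}, isolating the single genuinely new feature --- a moving attractor. As there, the first step is to reduce the two-dimensional system to the scalar equation $\dot x = f(x,t) := \lambda(t)(x-c)(x-\rho(t))$ on the invariant interval $[0,c]$, where $\rho(t):=\mu(t)/\lambda(t)$ and $y(t)=c-x(t)$. Since $f$ factorizes, the sign of $\dot x$ on $[0,c]$ is governed entirely by the position of $x$ relative to $c$ and to $\rho(t)$, and this drives every case. Note that $x\equiv c$ solves $f\equiv 0$ for all $t$ and is thus a genuine equilibrium, whereas $x_1^*(t)=\rho(t)$ makes $f$ vanish only instantaneously and so plays the role of a moving asymptotic state rather than an equilibrium; when $\rho\equiv 0$ (Case~4) or $\rho\equiv c$ (Case~3) it reduces to a genuine equilibrium.

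For Cases~2--4 the relevant equilibrium is constant in $t$, so the argument is essentially that of Theorem~\ref{thm:CTCS-fixed}. In Case~2 ($\rho(t)>c$) one has $(x-c)\le 0$ and $(x-\rho(t))<0$ on $[0,c]$, hence $\dot x\ge 0$, so $x(t)$ is nondecreasing and bounded above by $c$ and therefore converges; moreover $\dot x\ge\lambda_{\min}\eta^2>0$ whenever $x\le c-\eta$, so $x$ cannot accumulate below $c$ and the limit is $c$. Linearizing, $\partial_x f(c,t)=\lambda(t)(c-\rho(t))<0$ uniformly in $t$, which gives stability of $x_2^*=c$ (via Theorem~\ref{thm:Hartman-Grobman} in the constant-coefficient case, or its nonautonomous counterpart; equivalently, apply Theorem~\ref{thm:Lyapunov:global} with $V(x)=(c-x)^2$). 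Cases~3 ($\dot x=\lambda(t)(x-c)^2\ge 0$) and 4 ($\dot x=-\lambda(t)x(c-x)\le 0$) are the degenerate sub-cases; in each, monotonicity together with a uniform positive lower bound on $|\dot x|$ away from the equilibrium gives convergence to $x_2^*=c$, respectively $x_1^*=0$, and the vanishing of $\partial_x f$ at that point yields the borderline (``neither stable nor unstable'') classification exactly as in the fixed-parameter case.

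Case~1, where $0<\rho(t)<\xi<c$, is where the new work lies. Here $\partial_x f(c,t)=\lambda(t)(c-\rho(t))\ge\lambda_{\min}(c-\xi)>0$ uniformly, so for $x$ in a left-neighborhood of $c$ one has $\tfrac{d}{dt}(c-x)\ge\lambda_{\min}(c-\xi)(c-x)$; thus $c-x$ grows exponentially and $x_2^*=c$ is unstable. For the tracking claim I would argue in two steps. First, fix $\xi<\xi'<c$ and show that $[0,\xi']$ is eventually invariant: at $x=\xi'$ one has $x>\rho(t)$ and $x<c$, so $\dot x<0$ and a trajectory cannot cross $\xi'$ upward, while any trajectory with $x(0)<c$ that starts above $\xi'$ is strictly decreasing and, by the escape estimate near $c$, reaches $[0,\xi']$ in finite time. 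Hence there is a time after which $c-x(t)\ge c-\xi'$, so that $\lambda(t)(x(t)-c)\le-\kappa<0$ with $\kappa:=\lambda_{\min}(c-\xi')$. Second, set $u(t):=x(t)-\rho(t)$, so that
\[
\dot u = \lambda(t)\,(x-c)\,u - \dot\rho(t), \qquad \dot\rho(t)=\frac{\dot\mu(t)\lambda(t)-\mu(t)\dot\lambda(t)}{\lambda(t)^2},
\]
and on the eventually-invariant region, using $2|u||\dot\rho|\le\kappa u^2+|\dot\rho|^2/\kappa$,
\[
\frac{d}{dt}u^2 = 2\lambda(t)(x-c)u^2 - 2u\,\dot\rho(t) \;\le\; -2\kappa u^2 + 2|u|\,|\dot\rho(t)| \;\le\; -\kappa u^2 + \frac{|\dot\rho(t)|^2}{\kappa}.
\]
A Gronwall/comparison estimate then gives $\limsup_{t\to\infty}|x(t)-\rho(t)|\le\delta$ with $\delta:=\kappa^{-1}\sup_{t\ge 0}|\dot\rho(t)|$, a nonnegative constant determined by the rates of change of $\mu$ and $\lambda$ that vanishes when both are constant, recovering Case~1 of Theorem~\ref{thm:CTCS-fixed}.

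The main obstacle I anticipate is making the Case~1 argument fully rigorous at two junctures: (i) passing from ``eventually invariant'' to a uniform-in-time lower bound on $c-x(t)$, which requires combining the exponential-escape estimate near $x=c$ with the entry-time bound so as to rule out a trajectory lingering near $c$; and (ii) ensuring that the transient before the trajectory enters $[0,\xi']$ does not spoil the $\delta$-bound, i.e., that the final $\limsup$ estimate is genuinely independent of $x(0)$. A secondary, more bookkeeping-level point is that Theorem~\ref{thm:Hartman-Grobman} is stated for autonomous systems; in the time-varying cases the stability/instability conclusions should either be reattributed to a nonautonomous linearization result or, more self-containedly, read off directly from the comparison inequalities above, and I would favor the latter for uniformity across all four cases.
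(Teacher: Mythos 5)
Your proposal is correct and takes essentially the same approach as the paper: the paper likewise reduces to the scalar ODE $\dot{x}=\lambda(t)\left(x-\tfrac{\mu(t)}{\lambda(t)}\right)(x-c)$, notes that trajectories with $x(0)<c$ remain bounded away from $c$ (your eventual-invariance of $[0,\xi']$), and runs the Lyapunov argument on $V=\tfrac{1}{2}\left(x-\tfrac{\mu(t)}{\lambda(t)}\right)^2$, whose derivative is exactly half of your $\tfrac{d}{dt}u^2$ computation, concluding $\dot{V}<0$ outside a $\delta$-neighborhood with $\delta$ determined by $\left(\lambda'\mu-\mu'\lambda\right)/\lambda^3$ and the distance to $c$. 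Your Young-inequality/Gronwall closing step and comparison-based handling of Cases 2--4 merely spell out what the paper compresses into ``a standard Lyapunov argument'' and the appeal to the fixed-rate case.
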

\begin{proof}
	First note that if $x(0) < c$, then $x(t) < \psi$ for all $t$, for some $\psi< c$.  Next note that $\dot{x} = \lambda(t)\left(x-\frac{\mu(t)}{\lambda(t)}\right)\left(x-c\right)$.
	Consider the Lyapunov function $V(x,t) = \frac{1}{2}\left(x(t)-\frac{\mu(t)}{\lambda(t)}\right)^2$.
	The derivative of $V$ along trajectories is equal to 
	\begin{align*} \dot{V}(x) &= \frac{dV}{dx}\cdot \frac{dx}{dt} + \frac{dV}{dt} \\
	&= \left(x-\frac{\mu}{\lambda}\right)\left(\dot{x} - \frac{\mu'(t)\lambda(t)-\lambda'(t)\mu(t)}{\lambda^2(t)}\right) \\
	&= \lambda(t)\left(x-\frac{\mu}{\lambda}\right)^2\left(x-c\right)+ \left(x-\frac{\mu}{\lambda}\right) \left(\frac{\lambda'(t)\mu(t)-\mu'(t)\lambda(t)}{\lambda^2(t)}\right) .
	\end{align*}
	Note that $\dot{V} < 0$ if
	$\|x-\frac{\mu}{\lambda}\| > \left\|\frac{\lambda'(t)\mu(t)-\mu'(t)\lambda(t)}{\lambda^3(t)}\right\|/\|\psi-c\|$,
	and by setting $$\delta = \limsup_t  \left\|\frac{\lambda'(t)\mu(t)-\mu'(t)\lambda(t)}{\lambda^3(t)}\right\|/\|\psi-c\|, $$
	the result follows from a standard Lyapunov argument.
\end{proof}

To summarize, for the dynamical system of Theorem~\ref{thm:CTCS-vary}, all trajectories $x(t)$ will approach a $\delta$-neighborhood of
$\frac{\mu(t)}{\lambda(t)}$ when $0 < \frac{\mu(t)}{\lambda(t)} < c$, will approach $0$ when $\frac{\mu(t)}{\lambda(t)}=0$, and will approach $c$ when $\frac{\mu(t)}{\lambda(t)} \geq c$.

As a special case of Theorem~\ref{thm:CTCS-vary}, when $\lambda(t)$ and $\mu(t)$ asymptotically converge to a constant ratio,
then the equilibrium points and stability of such a continuously varying dynamical system are given by the following result.
\begin{theorem}
	For the dynamical system $(x(t),y(t))$ with $0 \leq x(0), y(0) \leq c$ and $x(t)+y(t)= c$, $\lambda(t)$, $\mu(t)$ continuously varying such that
	$\mu(t)/\lambda(t) \rightarrow \kappa$, the system has an asymptotic state at $x_1^*(t)=\frac{\mu(t)}{\lambda(t)}$ and an equilibrium point at $x_2^*=c$,
	and stability properties given by the following four cases.
	\begin{enumerate}
		\item $0 < \frac{\mu(t)}{\lambda(t)} < c$: The equilibrium point $x_2^*$ is unstable.
		Moreover, all trajectories of the dynamical system whose initial state is bounded away from $c$ will converge towards $x_1^*(t) \rightarrow \kappa$.
		\item $\frac{\mu(t)}{\lambda(t)} > c$:
		The equilibrium point $x_2^*$ is stable.
		Moreover, all trajectories of the dynamical system will converge towards the equilibrium point $x_2^*$.
		\item $\frac{\mu(t)}{\lambda(t)} = c$: There is one equilibrium point at $x_2^*$, which is neither stable nor unstable.
		Moreover, all trajectories of the dynamical system will converge towards the equilibrium point $x_2^*$.
		\item $\frac{\mu(t)}{\lambda(t)} = 0$: There is one equilibrium point at $x_1^*=0$, which is neither stable nor unstable.
		Moreover, all trajectories of the dynamical system will converge towards this equilibrium point $x_1^*$.
	\end{enumerate}
	\label{thm:CTCS-vary-special}
\end{theorem}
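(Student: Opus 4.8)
The plan is to reduce the statement to Theorem~\ref{thm:CTCS-vary} wherever possible and to supply one short additional argument where the conclusion here is genuinely stronger. Cases~2, 3 and~4 have exactly the same hypotheses ($\mu(t)/\lambda(t)>c$, $=c$, $=0$, respectively) and exactly the same conclusions (convergence to $x_2^*=c$, to $x_2^*=c$, to $x_1^*=0$, respectively) as the corresponding cases of Theorem~\ref{thm:CTCS-vary}, so they follow immediately; the extra hypothesis $\mu(t)/\lambda(t)\to\kappa$ plays no role in them beyond forcing $\kappa\ge c$, $\kappa=c$, $\kappa=0$, respectively. All of the new content is in Case~1: there Theorem~\ref{thm:CTCS-vary} only guarantees that trajectories eventually lie in a $\delta$-neighborhood of the moving asymptotic state $x_1^*(t)=\mu(t)/\lambda(t)$, whereas here we must upgrade this to genuine convergence of $x(t)$ to the single point $\kappa$ (and hence $y(t)=c-x(t)\to c-\kappa$).

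For Case~1 I would treat the representative situation $\kappa\in(0,c)$ in detail; the degenerate endpoints $\kappa=0$ and $\kappa=c$ coincide in the limit with Cases~4 and~3 and are handled in the same way. First, as in the proof of Theorem~\ref{thm:CTCS-vary}, since $x(0)$ is bounded away from $c$ and $\mu(t)/\lambda(t)\to\kappa<c$ is eventually uniformly below $c$ (the transient on a compact initial interval being controlled by comparison), one gets $0\le x(t)\le\psi<c$ for all $t$, for some $\psi$. Next, recalling $\dot{x}=\lambda(t)\bigl(x-\frac{\mu(t)}{\lambda(t)}\bigr)(x-c)$, I would replace the time-varying Lyapunov function used in Theorem~\ref{thm:CTCS-vary} by the fixed one $V(x)=\frac{1}{2}(x-\kappa)^2$ and write $x-\frac{\mu(t)}{\lambda(t)}=(x-\kappa)-\varepsilon(t)$ with $\varepsilon(t):=\frac{\mu(t)}{\lambda(t)}-\kappa\to 0$. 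Along trajectories,
\begin{align*}
\dot{V}(x)&=\lambda(t)(x-c)(x-\kappa)^2-\lambda(t)(x-c)\,\varepsilon(t)\,(x-\kappa)\\
&\le-\lambda(t)(c-\psi)(x-\kappa)^2+\lambda(t)\,c\,|\varepsilon(t)|\,|x-\kappa|\\
&=\lambda(t)\,|x-\kappa|\,\bigl[\,c\,|\varepsilon(t)|-(c-\psi)\,|x-\kappa|\,\bigr],
\end{align*}
using $0\le x(t)\le\psi<c$; hence $\dot{V}(x)<0$ whenever $|x-\kappa|>c\,|\varepsilon(t)|/(c-\psi)$, and this threshold tends to $0$.

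From here a standard Lyapunov/invariance argument --- at the level of rigor of the proof of Theorem~\ref{thm:CTCS-vary} --- closes the gap: given $\eta>0$, once $t$ is large enough that $c\,|\varepsilon(t)|/(c-\psi)<\eta/2$, the set $\{\,|x-\kappa|\le\eta/2\,\}$ is forward invariant and, while $|x(t)-\kappa|\ge\eta/2$, $V$ strictly decreases at a definite rate, so the trajectory must enter that set and remain there; as $\eta>0$ is arbitrary, $x(t)\to\kappa$, whence $y(t)\to c-\kappa$ and the moving asymptotic state $x_1^*(t)\to\kappa$, as asserted. Instability of $x_2^*=c$ is inherited from Theorem~\ref{thm:CTCS-vary} (or seen directly: $\dot{x}<0$ for $x$ just below $c$, so trajectories are repelled from $c$).

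I expect the main obstacle to be bookkeeping rather than anything conceptual: to say that $V$ decreases ``at a definite rate'' while outside the shrinking neighborhood one needs a mild nondegeneracy of $\lambda(\cdot)$ (for instance, $\inf_t\lambda(t)>0$, or $\int_0^\infty\lambda(t)\,dt=\infty$), which I would record as a standing assumption; one must also deal carefully with the initial transient before $\mu(t)/\lambda(t)$ is uniformly separated from $c$, and verify that the endpoint cases $\kappa\in\{0,c\}$ genuinely reduce to Cases~4 and~3. None of these points is deep, but they are where the care is needed.
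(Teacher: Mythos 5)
Your reduction of Cases 2--4 to Theorem~\ref{thm:CTCS-vary} is exactly how the paper treats this result: it states Theorem~\ref{thm:CTCS-vary-special} as a special case of Theorem~\ref{thm:CTCS-vary} and gives no separate proof. Where you differ is Case~1, and your route there is both different and, in my view, more solid than what a literal appeal to Theorem~\ref{thm:CTCS-vary} would give. The $\delta$ in Theorem~\ref{thm:CTCS-vary} is $\limsup_t \bigl\|(\lambda'(t)\mu(t)-\mu'(t)\lambda(t))/\lambda^3(t)\bigr\|/\|\psi-c\|$, i.e.\ it is controlled by the derivative of the ratio $\mu/\lambda$ (scaled by $1/\lambda$), and the hypothesis $\mu(t)/\lambda(t)\to\kappa$ alone does not force that quantity to vanish; so one cannot simply say ``$\delta=0$ in the limit'' and be done. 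Your fixed Lyapunov function $V(x)=\tfrac12(x-\kappa)^2$, with the decomposition $x-\mu/\lambda=(x-\kappa)-\varepsilon(t)$ and the bound $\dot V\le \lambda(t)|x-\kappa|\bigl[c|\varepsilon(t)|-(c-\psi)|x-\kappa|\bigr]$, sidesteps the derivative of $\mu/\lambda$ entirely and upgrades the $\delta$-neighborhood conclusion to genuine convergence $x(t)\to\kappa$; the computation and the shrinking-threshold/forward-invariance argument are correct, as is your a priori bound $x(t)\le\psi<c$ (a continuous ratio that stays below $c$ and tends to $\kappa<c$ has supremum below $c$, and $x$ can only increase while below that supremum). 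Your flagged nondegeneracy assumption on $\lambda$ (e.g.\ $\int_0^\infty\lambda(t)\,dt=\infty$ or $\inf_t\lambda(t)>0$) is genuinely needed --- with $\lambda(t)=e^{-t}$ and $\mu=\kappa\lambda$ the flow freezes and trajectories need not reach $\kappa$ --- and the paper's statement implicitly requires it as well, so recording it is a strength of your write-up rather than a defect; the endpoint cases $\kappa\in\{0,c\}$ are indeed handled by the same chasing argument, though a sentence making that explicit would tighten the proof.
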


To illustrates the dynamics of the above mathematical results when the equilibrium state $\frac{\mu(t)}{\lambda(t)}$ converges to a constant (Theorem~\ref{thm:CTCS-vary-special}),
consider the system across different initial conditions $x(0) \in [0, c=1]$ according to a (truncated) normal distribution with mean 0.5;
refer to the two leftmost diagrams in Figure~\ref{fig:dynplot}.
The middle diagram in Figure~\ref{fig:dynplot} illustrates the trajectories of the system over time for the ten initial conditions $x(0) = 0.1, \; x(0) = 0.2, \; \ldots, \; x(0) = 1.0$; 
similarly, the diagram to its right illustrates the system trajectories over time for all initial conditions $x(0) \in [0,1]$ with the corresponding probability density function
color map from the leftmost diagram.
The rightmost diagram in Figure~\ref{fig:dynplot} illustrates the probability density function for the state of the system at the end of the time horizon.
Note that the closer the initial state $x(0)$ is to the unstable equilibrium point at $x=1$, the slower the trajectory converges to the equilibrium state.

\begin{figure*}[htbp]
	\centerline{\includegraphics[width=\textwidth]{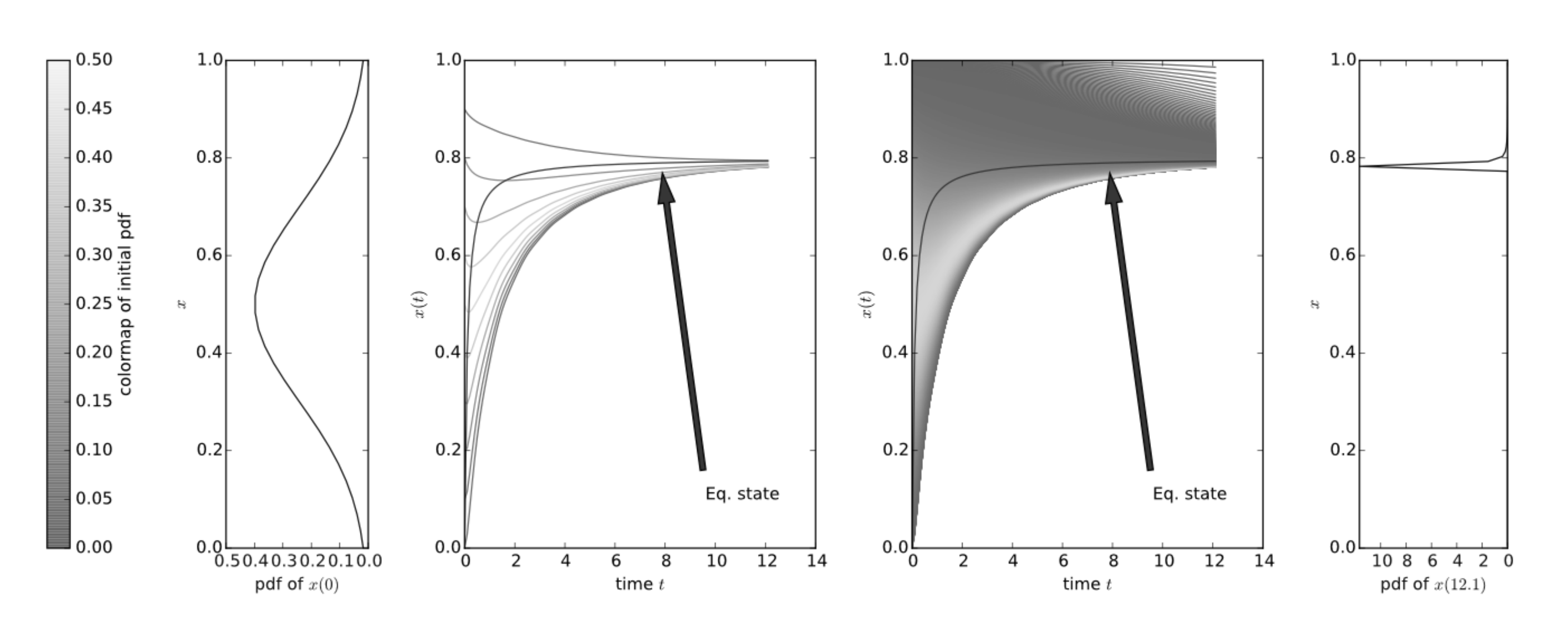}}
	\caption{Trajectories of $x(t)$.}\label{fig:dynplot}
\end{figure*}


\subsection{Optimal Control Results}
Consider the following optimal control formulation.
Let $R(x(t))$ and $C(y(t))$ denote the rewards and costs as a function of the state of the system at time $t$,
respectively.
More generally, we can have $R(\cdot,\cdot)$ and $C(\cdot,\cdot)$ each functions of both $x(t)$ and $y(t)$.
The decision variables are based on the controlled infection and cure rates $\lambda(t)$ and $\mu(t)$ deployed
by the system that represent changes from the original infection and cure rates, now denoted by $\hat{\lambda}(t)$
and $\hat{\mu}(t)$, where the system incurs costs $\hat{C}_\lambda(\cdot)$ and $\hat{C}_\mu(\cdot)$ as functions of
the deviations $\lambda(t) - \hat{\lambda}(t)$ and $\mu(t) - \hat{\mu}(t)$, respectively.
Throughout this subsection the control variables $\lambda(t)$ and $\mu(t)$ are assumed to be continuous in $t$,
with $\hat{\lambda}(t)$ and $\hat{\mu}(t)$ continuously varying for all $t$.
Define $\bm{\lambda} := (\lambda(t))$ and $\bm{\mu} := (\mu(t))$.
The objective function of our optimal control formulation is then given by
\begin{align}
\max_{\bm{\lambda}, \, \bm{\mu}} & \quad f\bigg( \, \int_{0}^T \Big\{ R(x(t)) - C(y(t)) -
\hat{C}_\lambda(\lambda(t) - \hat{\lambda}(t)) - \hat{C}_\mu(\mu(t) - \hat{\mu}(t))\Big\} dt \, \bigg), \label{opt:obj}
\end{align}
where $T$ denotes the time horizon, which can be finite or infinite, and $f(\cdot)$ represents an operator of
interest.
Let $\bm{\lambda}^*$ and $\bm{\mu}^*$ denote the optimal solution to \eqref{opt:obj} subject to the corresponding
ODEs of the previous section.

The above formulation represents the general case of the optimal control problem of interest.
Although there are no explicit solutions in general, this problem can be efficiently solved numerically using known methods from control theory.

To consider more tractable cases, and gain fundamental insights into the problem,
we start by first considering a one-sided version of this general problem in equilibrium with a fixed constant infection rate $\lambda=\hat{\lambda}=\hat{\lambda}(t)$
where the goal is to maximize the reward at the equilibrium point and only the parameter $\mu$ is under our control.
The optimal control in this case is a stationary policy for the cure rate, i.e., a single control $\mu$ in equilibrium.
Under a linear reward function with rate $\cR$ and linear cost functions with rates $\cC$ and $\hat{\cC}_\mu$,
we can rewrite the objective function \eqref{opt:obj} as
\begin{equation*}
\max_{\mu} \;\; \cR(x(\infty)) - \cC(y(\infty)) - \hat{\cC}_\mu(\mu) ,
\end{equation*}
since the optimal control is a stationary policy for the cure rate.
Upon substituting
$\min\{ c , \frac{\mu}{\lambda} \}$ for $x(\infty)$ and $c-x(\infty) = \max\{0,c-\frac{\mu}{\lambda}\}$
for $y(\infty)$, we derive the optimal control policy to be
\begin{align}
\mu^* = \argmax_{\mu \geq 0} \; \cR\bigg(\min\Big\{ c , \frac{\mu}{\lambda} \Big\} \bigg) - \cC\bigg(\Big[c-\frac{\mu}{\lambda}\Big]^+ \bigg) - \hat{\cC}_\mu(\mu) . \label{opt:one-sided:solution}
\end{align}
Namely, the optimal stationary control policy employs for all time $t$ the single control $\mu^*$ that solves \eqref{opt:one-sided:solution}.
An analogous formulation and result on $\lambda^*$ can be established for the opposite one-sided version of the problem in
equilibrium with constant cure rate $\mu$.

%
%



\MSS{
Next, as another step toward the general formulation,
consider the case where there are no costs for adjusting the infection and cure rates, i.e., $\hat{C}_\lambda(b) = 0 = \hat{C}_\mu(b)$ for all $b$.
Further assume that $(R(x)-C(y))$ has a single maximum at $(x^*,y^*)$,
which occurs when $R(\cdot)$ and $C(\cdot)$ are linear (in which case $x^*=0$ or $x^*=c=1$) or
when $R(\cdot)$ is concave and $C(\cdot)$ is convex (in which case $x^* \in [0,c=1]$).
We introduce the notion of an {\em ideal trajectory} denoted by $(x^I(t) = x^*, y^I(t)=y^*)$
that maximizes the objective function~(\ref{opt:obj}) at all time in this problem instance.
Hence, the optimal policy is to have $\frac{\mu(t)}{\lambda(t)} = x^*$ with $\lambda(t)$ as large as possible,
subject to $\frac{\hat{\mu}(t)}{\hat{\lambda}(t)}$ varying over time, since this governs the speed at which
$x(t)$ approaches and continually follows $x^*$.

More precisely, we establish a result showing that we can get arbitrarily close to the ideal trajectory, and thus the maximum objective.
Before doing so, we present the following related lemma on the general dynamics of the system.
\begin{lemma}
	For each $\epsilon > 0$ there is a $\hat{\delta} > 0$ such that if $0\leq x(0) < c-\epsilon$
	and $\lambda(t), \mu(t) > \hat{\delta}$ and $\frac{\mu(t)}{\lambda(t)} = x^*$ for all $t$, then
	$\left|x(t) - x^I(t)\right| < \epsilon$ for all $t$ sufficiently large.
	\label{lem:optimal}
\end{lemma}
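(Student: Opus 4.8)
The plan is to prove the lemma by analyzing the one-dimensional ODE $\dot{x} = \lambda(t)\bigl(x - \tfrac{\mu(t)}{\lambda(t)}\bigr)(x-c) = \lambda(t)(x-x^*)(x-c)$, which is precisely the reduced dynamical system studied in Theorem~\ref{thm:CTCS-fixed} and Theorem~\ref{thm:CTCS-vary} but now with the ratio $\mu(t)/\lambda(t)$ pinned to the constant $x^*$. Since $x^* \in [0,c]$ (by the assumption that $(R(x)-C(y))$ has its maximum at a feasible state), and since $x(0) < c-\epsilon < c$, the first observation (as in the proof of Theorem~\ref{thm:CTCS-vary}) is that $x(t) < c$ for all $t$: the line $x=c$ is an invariant barrier because $\dot{x} = 0$ there, and in fact one can extract a uniform bound $x(t) \le \psi < c$ for all $t$ from the initial gap $\epsilon$. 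The idea is then that on the region $x \in [0,\psi]$ we have $x - c \le \psi - c < 0$, so the sign of $\dot{x}$ is exactly the sign of $-(x-x^*)$, meaning $x^*$ is a genuine attracting equilibrium of this (now autonomous-in-structure, since $x^* = x^I(t)$ is constant) system.

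The key quantitative step is to make the rate of attraction independent of the particular functions $\lambda(t), \mu(t)$ beyond the hypothesis $\lambda(t) > \hat\delta$. Take the Lyapunov function $V(x) = \tfrac12 (x - x^*)^2$, exactly as in Theorem~\ref{thm:CTCS-vary} but with the time-derivative term $\tfrac{dV}{dt}$ now vanishing because $x^*$ is constant. Then
\begin{equation*}
\dot{V} = (x-x^*)\,\dot{x} = \lambda(t)\,(x-x^*)^2\,(x-c) \le -\hat\delta\,(c-\psi)\,(x-x^*)^2 = -2\hat\delta(c-\psi)\,V
\end{equation*}
on the invariant region $x \in [0,\psi]$. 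Gronwall's inequality then gives $V(x(t)) \le V(x(0))\,e^{-2\hat\delta(c-\psi)t}$, hence $|x(t) - x^*| \le |x(0) - x^*|\,e^{-\hat\delta(c-\psi)t} \le c\,e^{-\hat\delta(c-\psi)t}$. Given $\epsilon>0$, this is less than $\epsilon$ for all $t \ge T_0$ where $T_0 = \frac{1}{\hat\delta(c-\psi)}\log(c/\epsilon)$; since $x^I(t) = x^*$, this is exactly the claimed bound $|x(t) - x^I(t)| < \epsilon$ for all sufficiently large $t$. (One also needs the mild reverse direction: if $x(0) > x^*$ we use the same computation; and $x(t) \ge 0$ stays true since $x=0$ flows upward when $x^* > 0$, or $x=0$ is an equilibrium when $x^*=0$.)

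The main obstacle is the dependence of $\psi$ — and hence of the attraction rate $\hat\delta(c-\psi)$ — on the initial data, and ensuring the choice of $\hat\delta$ in the statement is made \emph{after} $\epsilon$ (and thus after the gap $c - x(0) > \epsilon$ is known) so that $\psi$ can be taken as, say, $c - \tfrac{\epsilon}{2}$ uniformly. Concretely: given $\epsilon$, any trajectory with $x(0) < c-\epsilon$ satisfies $x(t) \le c - \tfrac{\epsilon}{2} =: \psi$ for all $t$, because starting strictly below $x^* \vee (c-\epsilon)$ the only way to increase is toward $x^*\le c$, never past it, and starting above $x^*$ it decreases — a short case check on the sign of $\dot x$ on the three subintervals determined by $x^*$ and $c$ closes this. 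Then $c - \psi = \epsilon/2$ is fixed, the bound above holds with rate $\hat\delta\epsilon/2$, and one reads off that it suffices to take any $\hat\delta > 0$ (the "sufficiently large $t$" absorbs the rest), with the cleaner statement that for $t \ge \frac{2}{\hat\delta\epsilon}\log(c/\epsilon)$ the conclusion holds. I would double-check the boundary case $x^* = c$ (excluded here since then $x(0) < c - \epsilon$ still flows up toward $c$ but never reaches it, and $\dot V$ may degenerate) — but under the stated hypothesis $x^* \in \{0\} \cup [0,c]$ with the interesting case $x^* < c$, the argument goes through cleanly.
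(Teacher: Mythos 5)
Your overall strategy is sound and essentially parallels the paper's own argument: the paper bounds the vector field by linear differential inequalities of the form $\dot x \le -a\,(x-x^*)$ for $x^* < x < c-\epsilon$ (and the analogous bound for $x < x^*$) with $a = \epsilon\lambda(t)$, and then invokes the comparison theorem (Theorem~\ref{thm:comparison}); you run the same contraction estimate through the Lyapunov function $V=\tfrac{1}{2}(x-x^*)^2$ of Theorem~\ref{thm:CTCS-vary} (with the time-derivative term vanishing since $x^*$ is constant) and Gronwall. These are equivalent mechanisms, both yielding exponential approach to $x^*$ at a rate proportional to $\lambda(t)$ times the gap between the trajectory and $c$, and your version has the merit of making the rate and the ``sufficiently large $t$'' explicit.

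One step fails as written, however: the claim that every trajectory with $x(0) < c-\epsilon$ satisfies $x(t) \le \psi := c-\tfrac{\epsilon}{2}$ for all $t$. If $x^* \in (c-\tfrac{\epsilon}{2},\,c)$, a trajectory starting below $x^*$ increases monotonically toward $x^*$ and eventually exceeds $c-\tfrac{\epsilon}{2}$, so this uniform bound is false, and with it the stated rate $\hat\delta\epsilon/2$ and the explicit time $\tfrac{2}{\hat\delta\epsilon}\log(c/\epsilon)$. The repair is exactly what your sign analysis actually proves: the trajectory never exceeds $\max\{x(0),x^*\}$, so you may take $\psi = \max\{c-\epsilon,\,x^*\}$, which is still strictly less than $c$ whenever $x^* < c$, and the contraction rate becomes $\hat\delta\min\{\epsilon,\,c-x^*\} > 0$; since the lemma only requires the bound for all $t$ sufficiently large, the conclusion is unaffected. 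Separately, your argument (like the paper's terse proof) degenerates when $x^* = c$, where the contraction constant vanishes; that case should either be excluded explicitly or handled by the one-line estimate that $u := c-x$ satisfies $\dot u = -\lambda(t)u^2 \le -\hat\delta u^2$, hence $u(t) \le (u(0)^{-1}+\hat\delta t)^{-1} \rightarrow 0$.
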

\begin{proof}
	It is easy to show that $\frac{dx}{dt} \leq -a(x(t)-x^*)$ if $c-\epsilon > x(t) > x^*$
	and $\frac{dx}{dt} \geq a(x(t)-x^*)$ if $x(t) < x^*$ where $a =\epsilon\lambda(t)$.
	Hence we can make $a$ as large as possible by making $\lambda(t)$, and implicitly $\mu(t)$,
	as large as possible, and the conclusion follows from Theorem \ref{thm:comparison}.
\end{proof}

We can now present the main result of interest for this instance of the general formulation.
\begin{theorem}
	Suppose $\hat{C}_\lambda(b) = 0 = \hat{C}_\mu(b)$, for all $b$.
	For each $\epsilon > 0$ with $0\leq x(0) < c-\epsilon$, there is a $\hat{\delta} > 0$ such that if $\lambda(t), \mu(t) > \hat{\delta}$
	and $\frac{\mu(t)}{\lambda(t)} = x^*$ for all $t$, then the optimal solution of \eqref{opt:obj} is realized within $\epsilon$.
	\label{thm:optimal}
\end{theorem}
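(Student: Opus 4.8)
The plan is to benchmark the value attained by the proposed family of controls against that of the \emph{ideal trajectory} $(x^{I}(t),y^{I}(t))\equiv(x^{*},y^{*})$ introduced above. Under the hypothesis $\hat{C}_\lambda\equiv\hat{C}_\mu\equiv 0$, the integrand in \eqref{opt:obj} reduces to $R(x(t))-C(y(t))$, and since $y(t)=c-x(t)$ this is a function $h(x):=R(x)-C(c-x)$ of $x$ alone that attains its maximum at $x=x^{*}$ by construction of $x^{*}$. Assuming, as is the case for the utility operators of interest, that $f$ is nondecreasing and continuous, the quantity $J^{I}:=f\big(\int_{0}^{T}h(x^{*})\,dt\big)$ is an upper bound for the optimal value $J^{*}$ of \eqref{opt:obj}: every admissible trajectory shares the fixed initial condition $x(0)$ and satisfies $h(x(t))\le h(x^{*})$ at each $t$, so $\int_{0}^{T}h(x(t))\,dt\le \int_{0}^{T}h(x^{*})\,dt$ and monotonicity of $f$ gives the bound. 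It therefore suffices to show that, for each $\epsilon>0$, there is a $\hat{\delta}>0$ so that the control with $\mu(t)/\lambda(t)=x^{*}$ and $\lambda(t),\mu(t)>\hat{\delta}$ attains a value within $\epsilon$ of $J^{I}$.

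First I would split the horizon into an initial transient $[0,t_{0}]$ and the remainder $[t_{0},T]$. On $[t_{0},T]$ I invoke Lemma~\ref{lem:optimal}: for any $\epsilon'>0$ there is $\hat{\delta}$ with $|x(t)-x^{*}|<\epsilon'$ for $t$ large; revisiting the proof of that lemma, which yields $\dot{x}\le -a(x-x^{*})$ with $a=\epsilon'\lambda(t)\ge \epsilon'\hat{\delta}$ on the relevant region and then applies the comparison principle (Theorem~\ref{thm:comparison}), shows the threshold $t_{0}$ can be taken of order $\hat{\delta}^{-1}\log(1/\epsilon')$, so that $t_{0}\to 0$ as $\hat{\delta}\to\infty$. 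On $[t_{0},T]$, uniform continuity of $h$ on the compact interval $[0,c]$ gives $h(x(t))\ge h(x^{*})-\omega_{h}(\epsilon')$, where $\omega_{h}$ denotes the modulus of continuity of $h$. On the transient $[0,t_{0}]$ I simply bound $h(x(t))\ge h_{\min}:=\min_{x\in[0,c]}h(x)>-\infty$.

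Combining the two pieces, $\int_{0}^{T}h(x(t))\,dt\ \ge\ T\,h(x^{*})-T\,\omega_{h}(\epsilon')-t_{0}\big(h(x^{*})-h_{\min}\big)$, and the last two terms are made arbitrarily small by first choosing $\epsilon'$ small enough that $T\,\omega_{h}(\epsilon')$ is small and then $\hat{\delta}$ large enough that $t_{0}\big(h(x^{*})-h_{\min}\big)$ is small. Continuity of $f$ on the bounded range of values taken by the integral then gives $J\ge J^{I}-\epsilon$, and since $J^{*}\le J^{I}$ we conclude $J\ge J^{*}-\epsilon$, i.e., the optimal solution of \eqref{opt:obj} is realized within $\epsilon$. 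For $T=\infty$ the same argument applies once $f$ is read as a long-run-average or discounted operator, in which case the bounded transient is asymptotically negligible and only the $\omega_{h}(\epsilon')$ term needs to be controlled.

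The step I expect to be the main obstacle is extracting the \emph{quantitative} convergence rate from Lemma~\ref{lem:optimal}: its statement only asserts that the bound holds ``for $t$ sufficiently large,'' so the argument must track through the comparison-principle proof that the relaxation time scales like $1/\hat{\delta}$ and is uniform in $t$ (using $\lambda(t)>\hat{\delta}$ rather than the pointwise value of $\lambda(t)$). A secondary technical point is handling the operator $f$ in full generality — the clean ``within $\epsilon$'' conclusion presumes $f$ is monotone and uniformly continuous on the relevant range, and for $T=\infty$ one must additionally verify that the transient contribution vanishes under whichever averaging or discounting $f$ encodes.
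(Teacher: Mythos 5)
Your proposal is correct and follows essentially the same route as the paper: the paper's proof is a one-sentence appeal to Lemma~\ref{lem:optimal}, arguing that taking $\lambda(t)$ (and hence $\mu(t)$) large makes the trajectory reach and track $x^*$ fast enough to realize the optimum within $\epsilon$. Your write-up simply makes that argument quantitative — the transient-plus-tail decomposition, the $\hat{\delta}^{-1}$ relaxation-time bound extracted from the comparison theorem, and the explicit monotonicity/continuity assumptions on $f$ — details the paper leaves implicit.
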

\begin{proof}
	The result directly follows as a consequence of Lemma~\ref{lem:optimal}, where we can continually make $\lambda(t)$, and implicitly $\mu(t)$,
	as large as possible to reach the optimal solution as fast as possible and to persistently follow the optimal solution as fast as possible.
\end{proof}

Let us next consider the above case where there are no costs for adjusting the infection and cure rates,
but where there are constraints on the rates of change of the control variables $\lambda(t)$ and $\mu(t)$,
i.e., $\theta_\lambda^\ell < \dot{\lambda} < \theta_\lambda^u$ and $\theta_\mu^\ell < \dot{\mu} < \theta_\mu^u$.
We continue to assume that $(R(x)-C(y))$ has a single maximum at $(x^*,y^*)$~~---~in which case $x^*=0$ or $x^*=c=1$ when $R(\cdot)$ and $C(\cdot)$ are linear;
or $x^* \in [0,c=1]$ when $R(\cdot)$ is concave and $C(\cdot)$ is convex.
Our above notion of an {\em ideal trajectory} remains the same,
namely $(x^I(t) = x^*, y^I(t)=y^*)$ maximizes the objective function~(\ref{opt:obj}) without constraints for all time $t$.
We therefore have that the optimal policy consists of setting $\lambda(t)$ and $\mu(t)$ so as to maximize the speed at which
$x(t)$ approaches and continually follows a maximum within an achievable neighborhood of $x^*$, subject to the constraints on
$\dot{\lambda}$ and $\dot{\mu}$ and subject to $\frac{\hat{\mu}(t)}{\hat{\lambda}(t)}$ varying over time.

More precisely, we establish a result showing that we can get arbitratily close to the best state within a $\delta$-neighborhood of the ideal trajectory,
and thus the maximum objective, where $\delta$ is a nonnegative constant that depends on the rates of change of $\hat{\lambda}(t)$ and $\hat{\mu}(t)$,
and on $\theta_\lambda^\ell , \theta_\lambda^u, \theta_\mu^\ell , \theta_\mu^u$.
Define $\cD(t) := \{ x(t) : \|x(t) - x^*\| \leq \delta \}$ for all $t$.
The main result of interest for this instance of the general formulation can then be expressed as follows.
\begin{theorem}
	Suppose $\hat{C}_\lambda(b) = 0 = \hat{C}_\mu(b)$, for all $b$, together with the constraints
	$\theta_\lambda^\ell < \dot{\lambda} < \theta_\lambda^u$ and $\theta_\mu^\ell < \dot{\mu} < \theta_\mu^u$.
	For each $\epsilon > 0$ with $0\leq x(0) < c-\epsilon$, there is a $\hat{\delta} > 0$ such that if $\lambda(t), \mu(t) > \hat{\delta}$
	and $\frac{\mu(t)}{\lambda(t)} = \hat{x}^*(t) := \argmax_{x(t) \in \cD(t)} (R(x(t))-C(y(t)))$ for all $t$, then the optimal solution of \eqref{opt:obj}
	under the constraints on $\dot{\lambda}$ and $\dot{\mu}$ is realized within $\epsilon$.
	\label{thm:optimal2}
\end{theorem}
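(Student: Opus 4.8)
The plan is to run the argument behind Theorem~\ref{thm:optimal} with the global target $x^*$ replaced by the constraint-feasible target $\hat{x}^*(t)$, and to control the residual tracking error with a Lyapunov computation of exactly the type used in the proof of Theorem~\ref{thm:CTCS-vary}. First, because $\hat{C}_\lambda(b)=0=\hat{C}_\mu(b)$ and (as in Theorem~\ref{thm:optimal}) $f(\cdot)$ is taken to be nondecreasing, the objective \eqref{opt:obj} collapses to $f\!\left(\int_0^T (R(x(t))-C(y(t)))\,dt\right)$, so it suffices to keep the integrand $R(x(t))-C(y(t))$ pointwise as large as the dynamics $\dot{x}=\lambda(t)\big(x-\frac{\mu(t)}{\lambda(t)}\big)(x-c)$ and the constraints $\theta_\lambda^\ell<\dot\lambda<\theta_\lambda^u$, $\theta_\mu^\ell<\dot\mu<\theta_\mu^u$ permit. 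Since $\dot\lambda,\dot\mu$ are now bounded, a policy cannot in general hold $\mu(t)/\lambda(t)$ exactly at the unconstrained maximizer $x^*$ while simultaneously driving $x(t)$ to it; the best reachable asymptotic band around $x^*$ is $\cD(t)$, whose half-width $\delta$ is the tracking tolerance determined by the caps on $\dot\lambda,\dot\mu$ together with the variation of $\hat\lambda,\hat\mu$, and within $\cD(t)$ the best state is precisely $\hat{x}^*(t)=\argmax_{x\in\cD(t)}(R(x)-C(y))$, the target prescribed in the statement.

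Next I would establish the constrained analogue of Lemma~\ref{lem:optimal}: with $\mu(t)/\lambda(t)=\hat{x}^*(t)$ and $\lambda(t),\mu(t)$ driven as large as the rate constraints allow, the scalar dynamics give $\dot{x}\le -a(x-\hat{x}^*(t))$ when $c-\epsilon>x>\hat{x}^*(t)$ and $\dot{x}\ge a(x-\hat{x}^*(t))$ when $x<\hat{x}^*(t)$, with $a$ of order $\lambda(t)$ and bounded below over $[0,T]$ in terms of $\theta_\lambda^\ell,\theta_\lambda^u$; feeding this into the comparison principle (Theorem~\ref{thm:comparison}) drives $|x(t)-\hat{x}^*(t)|$ below any prescribed level after a time that shrinks as $\lambda$ is made larger. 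To account for $\hat{x}^*(t)$ drifting, I would take $V(x,t)=\frac12(x(t)-\hat{x}^*(t))^2$ as in the proof of Theorem~\ref{thm:CTCS-vary}: $\dot V<0$ whenever $\|x-\hat{x}^*(t)\|$ exceeds a quantity of order $\big|\frac{d}{dt}(\mu/\lambda)\big|/(\lambda\,\|\psi-c\|)$, and since $\big|\frac{d}{dt}(\mu/\lambda)\big|=|\dot\mu\lambda-\mu\dot\lambda|/\lambda^2$ is controlled by $\theta_\lambda^\ell,\theta_\lambda^u,\theta_\mu^\ell,\theta_\mu^u$ and by the rates of change of $\hat\lambda,\hat\mu$, this recovers the stated $\delta$ and hence $\cD(t)$. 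Combining the two pieces yields a threshold $\hat\delta>0$ such that, for the prescribed policy, $x(t)$ enters and stays within an $\epsilon$-neighborhood of $\hat{x}^*(t)$ for all $t$ past a fixed time $t_0$ that can be made negligible relative to $T$.

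To finish, continuity of $R,C$ and the fact that $\hat{x}^*(t)$ maximizes $R-C$ over the reachable band $\cD(t)$ give that $R(x(t))-C(y(t))$ is within $O(\epsilon)$ of its best constraint-feasible value for a.e.\ $t$, so $\int_0^T(R(x(t))-C(y(t)))\,dt$, and hence by continuity/monotonicity of $f$ the objective \eqref{opt:obj}, is realized within $\epsilon$ of its constrained optimum. The main obstacle is the constrained version of Lemma~\ref{lem:optimal}: one must verify that the attractive drift toward $\hat{x}^*(t)$ still dominates the drift of the moving target once $\dot\lambda,\dot\mu$ are capped, and — for the optimality half of the claim — that no constraint-feasible control can force $x(t)$ out of $\cD(t)$ asymptotically, i.e.\ that $\cD(t)$ is genuinely the best achievable band; both amount to the same Lyapunov estimate carried out carefully with the explicit constants $\theta_\lambda^\ell,\theta_\lambda^u,\theta_\mu^\ell,\theta_\mu^u$.
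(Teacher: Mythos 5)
Your proposal follows essentially the same route as the paper's own (very terse) proof, which simply combines the arguments of Theorems~\ref{thm:optimal} and \ref{thm:CTCS-vary} to reach and persistently track the best state $\hat{x}^*(t)$ within the $\delta$-neighborhood $\cD(t)$. In fact you supply more detail than the paper does: the constrained analogue of Lemma~\ref{lem:optimal} and the verification that $\cD(t)$ is the best achievable band, which you flag as the main obstacles, are precisely the points the paper leaves implicit.
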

\begin{proof}
	The result follows from the combination of arguments establishing Theorems~\ref{thm:optimal} and \ref{thm:CTCS-vary},
	where we can continually set $\lambda(t)$ and $\mu(t)$ so as to reach and persistently follow the best state $\hat{x}^*(t)$
	within a $\delta$-neighborhood of the optimal solution as fast as possible.
\end{proof}

When the costs for adjusting the infection and cure rates are introduced to either of the above instances of the general formulation,
the optimal policy will deviate from the ideal policies above where the deviation will depend on the initial state $x(0)$,
the cost functions $\hat{C}_\lambda(\cdot)$ and $\hat{C}_\mu(\cdot)$, the rates of change of $\hat{\lambda}(t)$ and $\hat{\mu}(t)$,
and any constraints on the rates of change of $\lambda(t)$ and $\mu(t)$.
Even though the policy of following the ideal trajectory is not optimal in general, it can provide structural properties and insight
into the complex dynamics of the system in a very simple and intuitive manner.
}

\subsection{Higher-Dimensional Dynamical Systems}
\label{sec:limit:general}
One of the benefits of reducing the asymptotic behavior of stochastic processes to a deterministic dynamical system is that the dynamical system can be more amenable to analysis,
especially when the system is autonomous.
Moreover, structural properties can be deduced by examining the state equations.
For instance it is well known that low dimensional systems cannot exhibit complex behavior.
In an autonomous dynamical system of the form $\dot{x} = f(x)$ where $f$ is continuous, oscillatory behavior is only possible if the dimension of $x$ is $2$ or higher;
and chaotic behavior is only possible if the dimension of $x$ is $3$ or higher~\cite{guckenheimer:1983}.
If $\frac{\partial f_i}{\partial x_j} \geq 0$ for all $i\neq j$
(respectively, if $\frac{\partial f_i}{\partial x_j} \leq 0$ for all $i\neq j$), then such systems are called cooperative
(respectively, competitive)\footnote{Both such systems were found to be useful in modeling various types of biological systems~\cite{smith:mds2012}.}
and in these cooperative systems there are no nontrivial periodic solutions that are attracting.
If in addition the Jacobian of $f$ is irreducible for all $x$, then almost every initial condition approaches the set of equilibrium points
and thus complex oscillatory behavior are not likely in such systems~\cite{hirsch:cooperative:1985}. 
For most density-dependent stochastic population processes,
the dynamics are bounded and hence the main dynamics are the trajectory approaching an equilibrium set.

When this is not the case and the dimensionality of the dynamical system is above $2$ or $3$, then the analysis of the dynamical system, as well as the original stochastic process,
is more complex.
Furthermore, when the system is nonautonomous as considered in this paper, the dynamics can be arbitrarily complex.
However, assuming the dynamical system parameters are varying at a much slower time scale than the dynamics and control of the system, then results in the analysis and control of slowly varying nonlinear dynamical systems can be brough to bear~\cite{peuteman:slowlyvarying:2002}.
At the same time, structural properties deduced from the state equations of the nonautonomous dynamical system and numerical simulation of these equations
render important characteristics and information about the asymptotic behavior and optimal control of the original stochastic process.

\MSS{
Theorem~\ref{thm:Kurtz-new2} shows that the stochastic process $Z_n(t)$ has mean-field behavior for large $n$ described by the integral form of the dynamical system in \eqref{eq:Z-dynamics}.
The reverse is also true: For every dynamical systems with a bounded invariant set, it is possible to construct a stochastic process whose mean-field behavior (as $n\rightarrow\infty$)
is described by the dynamics of the dynamical system.
There are many different stochastic processes whose asymptotic behavior maps to the same dynamical system.
One procedure for constructing such a stochastic process is roughly described as follows.
\begin{enumerate}
\item Shift the origin and rescale the state space such that the invariant set lies in $[0,B]^{d-1}$ and the vector field $\dot{x_i} = F_i(x,t)$, $1\leq i\leq d-1$, where $x = (x_1,\cdots, x_{d-1})$.
\item For each $i$, decompose the $i$-th component of the vector field $F_i(x,t)$ into $F_i(x,t) = P_i(x,t) - N_i(x,t)$, where $P_i(x,t)\geq 0$ and $N_i(x,t)\geq 0$.
\item Construct a stochastic process of $n$ agents and $d$ classes.
\item The number of agents in class $i$ is denoted $c_i$.
\item For $1\leq i \leq d-1$, the transition intensities of class $i$ to class $d$ are given by
\begin{eqnarray*}
q^{(n)}_{c_i \rightarrow c_i+1, c_{d}\rightarrow c_{d}-1} & = & \alpha P_i(x, t),\\
q^{(n)}_{c_i \rightarrow c_i-1, c_{d}\rightarrow c_{d}+1} & = & \alpha N_i(x, t),
\end{eqnarray*}
where $x_i = \frac{c_i}{n}$ and $\alpha > 0$ is some fixed constant.
\end{enumerate}

If the decomposition of the vector field into $P_i$ and $N_i$ is not easily obtained, an alternative procedure for constructing such a stochastic process is as follows.
\begin{enumerate}
\item Shift the origin and rescale the state space such that the invariant set lies in $[0,B]^{d-1}$ and the vector field $\dot{x_i} = F_i(x,t)$, $1\leq i\leq d-1$.
\item Construct a stochastic process of $n$ agents and $d$ classes.
\item The number of agents in class $i$ is denoted $c_i$.
\item For $1\leq i \leq d-1$, the transition intensities of class $i$ to class $d$ are given by
$$q^{(n)}_{c_i \rightarrow c_i+\mbox{sgn}(a), c_{d}\rightarrow c_{d}-\mbox{sgn}(a)} = |a|,$$
where $a =  \alpha F_i(x, t)$ and $x_i = \frac{c_i}{n}$ and $\alpha > 0$ is some fixed constant.
\end{enumerate}

As one specific example, along the lines of a $d$-dimensional viral propagation process,
applying the first procedure to the well-known Lorenz system~\cite{lorenz:1963} (which admits a decomposition into $P_i$ and $N_i$)
yields a stochastic process with $d=4$ classes and transitition intensities described by
\begin{equation*}
\begin{array}{lcl}
q^{(n)}_{c_1 \rightarrow c_1+1, c_{4}\rightarrow c_{4}-1} &=& \alpha a \left(x_1+\frac{1}{4}\right),\\
q^{(n)}_{c_1 \rightarrow c_1-1, c_{4}\rightarrow c_{4}+1} &=& \alpha \left(\frac{100x_1x_3}{3}+x_2+\frac{b}{3}\right),\\
q^{(n)}_{c_2 \rightarrow c_2+1, c_{4}\rightarrow c_{4}-1} &=& \alpha (cx_3+24(x_1+x_2)),\\
q^{(n)}_{c_2 \rightarrow c_2-1, c_{4}\rightarrow c_{4}+1} &=& \alpha  \frac{3ax_2}{2},\\
q^{(n)}_{c_3 \rightarrow c_3+1, c_{4}\rightarrow c_{4}-1} &=& \alpha \left(\frac{2bx_1}{3}+\frac{50x_3}{3}+\frac{1}{2}\right),\\
q^{(n)}_{c_3 \rightarrow c_3-1, c_{4}\rightarrow c_{4}+1} &=& \alpha (48x_1x_2+12).
\end{array}
\end{equation*}

We use the parameters $a=10$, $b=28$ and $c=\frac{8}{3}$, which are the standard parameters for the Lorenz system to produce the butterfly chaotic attractor.
Simulating this stochastic process with $\alpha = 0.015$ and $n =6000$ for $5000000$ iterations renders the values of $x_i$ whose phase portrait and time series
are illustrated in Figures \ref{fig:lorenz} and \ref{fig:lorenzts}, respectively.
The value of $x_4$ is not shown since $x_4 = 1-(x_1+x_2+x_3)$ can be derived from the other components.
These figures clearly show that the output of the stochastic process shares the features of the Lorenz chaotic attractor, even for a relatively small value of $n$.

\begin{figure}[htbp]
\includegraphics[width=5in]{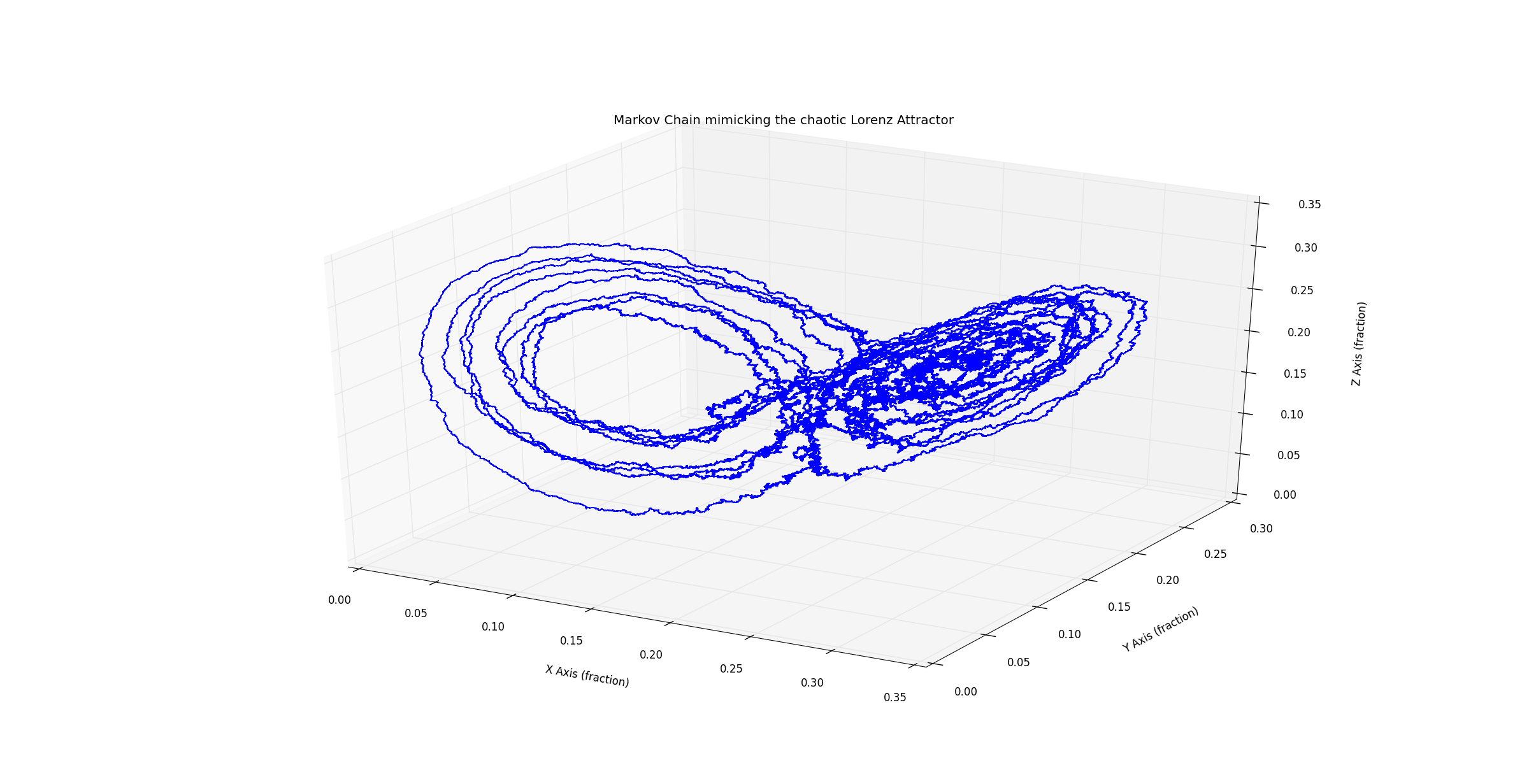}
\caption{Phase portrait from the stochastic process mimicking a Lorenz attractor}
\label{fig:lorenz}
\end{figure}

\begin{figure}[htbp]
\includegraphics[width=5in]{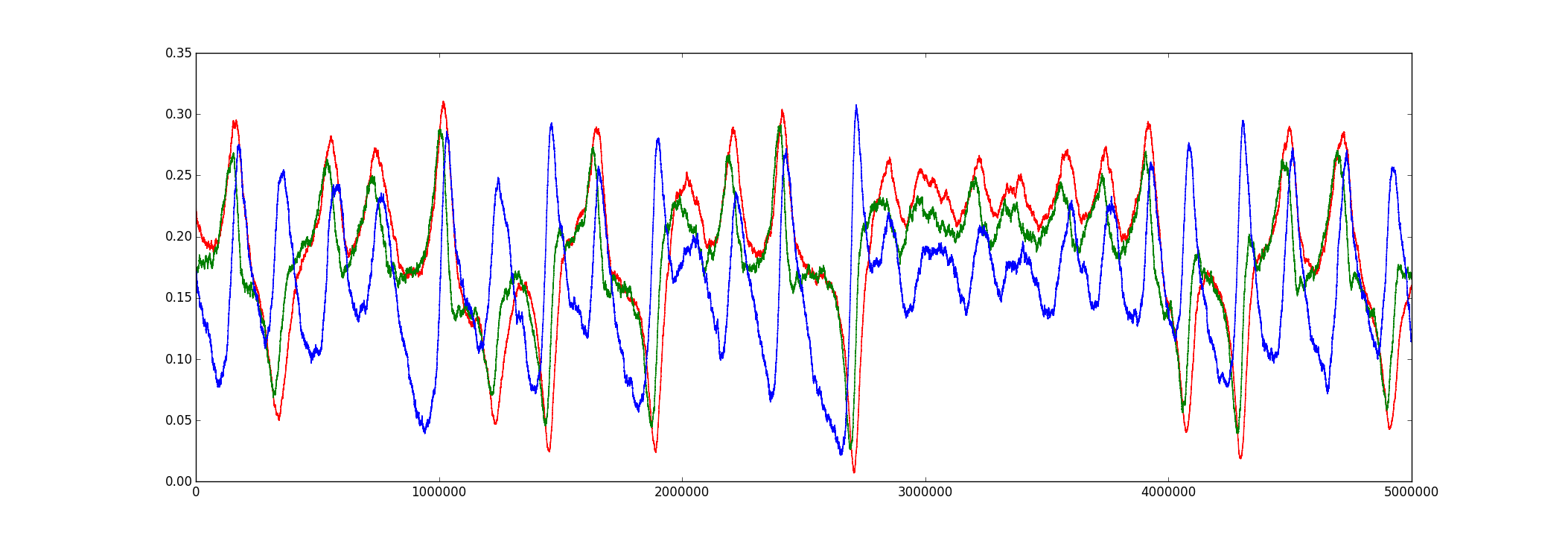}
\caption{Time series from the stochastic process mimicking a Lorenz attractor}
\label{fig:lorenzts}
\end{figure}
}

\section{Conclusion}
Motivated by current and emerging applications of today, we considered in this paper the general class of density-dependent stochastic population processes
with time-varying behavior.
We have established that this class of stochastic processes, under a mean-field scaling, converges to a corresponding class of nonautonomous dynamical systems,
thus extending classical results for such density-dependent population processes  without time-varying behavior.
A special case of viral-propagation processes is considered, thus extending recent results of mean-field limits for such processes to support time-varying parameters.
We also analogously show that the optimal control of the general class of density-dependent stochastic population processes converges to the optimal control of
the corresponding class of limiting nonautonomous dynamical systems.
Important mathematical properties of interest are derived through an analysis of the dynamical system and its optimal control.

\appendix
\section{}
\label{app:lemmas}
This appendix presents a few Lemmas, providing upper and lower bounds on $\ex[Y_n(t)]$, that are used in the proofs of some of our main results.

\begin{lemma}
	\label{lem:uncond}
\begin{equation}\label{uncond}
\frac{\partial \ex[\hat{Y}_n(t)]}{\partial t}=\frac{\lambda(t)}{n}\ex[(n-\hat{Y}_n(t))\hat{Y}_n(t)]-\mu(t)\ex[\hat{Y}_n(t)].
\end{equation}
\end{lemma}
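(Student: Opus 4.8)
The plan is to apply Dynkin's formula (equivalently the Kolmogorov forward equation) to the time-inhomogeneous Markov chain $\hat{Z}_n(t) = (\hat{X}_n(t),\hat{Y}_n(t))$ and the coordinate function $f(i,j) = j$. First I would record the action of the generator $Q_n(t)$ on $f$: out of a state $(i,j)\in\hat{\Omega}_n$ the only transitions are the infection jump $(i,j)\to(i-1,j+1)$ at rate $\lambda(t)\,i\,\frac{j}{n}$ and the cure jump $(i,j)\to(i+1,j-1)$ at rate $\mu(t)\,j$ (see~\eqref{eq:VP:q}), which change the value of $f$ by $+1$ and $-1$ respectively, so that
\begin{equation*}
(Q_n(t)f)(i,j) \;=\; \lambda(t)\,i\,\frac{j}{n} - \mu(t)\,j \;=\; \frac{\lambda(t)}{n}\,(n-j)\,j - \mu(t)\,j ,
\end{equation*}
where the last equality uses $i+j=n$ on $\hat{\Omega}_n$.

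Next, Dynkin's formula gives $\ex[\hat{Y}_n(t)] = \ex[\hat{Y}_n(0)] + \ex\big[\int_0^t (Q_n(s)f)(\hat{Z}_n(s))\,ds\big]$. Since $\hat{\Omega}_n$ is finite and $\lambda(\cdot),\mu(\cdot)$ are continuous, hence bounded on $[0,t]$, the integrand is bounded uniformly in $s$ and in the sample point, so Fubini's theorem lets me interchange the expectation with the time integral and obtain $\ex[\hat{Y}_n(t)] = \ex[\hat{Y}_n(0)] + \int_0^t\big(\frac{\lambda(s)}{n}\ex[(n-\hat{Y}_n(s))\hat{Y}_n(s)] - \mu(s)\ex[\hat{Y}_n(s)]\big)\,ds$. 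To pass from this integral identity to the stated differential identity~\eqref{uncond}, I would observe that the integrand $s\mapsto \frac{\lambda(s)}{n}\ex[(n-\hat{Y}_n(s))\hat{Y}_n(s)] - \mu(s)\ex[\hat{Y}_n(s)]$ is continuous in $s$: $\lambda,\mu$ are continuous by assumption, and $s\mapsto\ex[g(\hat{Z}_n(s))]$ is continuous for every bounded $g$ because the transition function of a finite-state continuous-time chain depends continuously (indeed differentiably) on time, solving $\dot{P}(s) = P(s)Q_n(s)$ with $Q_n$ continuous. The fundamental theorem of calculus then yields~\eqref{uncond} directly.

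I do not anticipate a substantive obstacle: this is the standard time-inhomogeneous Dynkin/forward-equation argument, and the only points needing care — integrability of the integrand, the Fubini interchange, and differentiating in $t$ — are all immediate here because the process lives on a finite state space with locally bounded rates. The mildest technical point is making precise the continuity in $s$ of $s\mapsto\ex[g(\hat{Z}_n(s))]$, which can be quoted from the standard theory of time-inhomogeneous Markov chains (e.g.,~\cite[Chapters~4,~7]{EthKur86}).
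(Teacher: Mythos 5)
Your proposal is correct and follows essentially the same route as the paper: both proofs boil down to computing the infinitesimal drift of $\hat{Y}_n$ from the transition intensities in \eqref{eq:VP:q} (the $+1$ infection jump at rate $\frac{\lambda(t)}{n}(n-\hat{Y}_n)\hat{Y}_n$ and the $-1$ cure jump at rate $\mu(t)\hat{Y}_n$) and then justifying the exchange of expectation with differentiation/integration by the finiteness of the state space and continuity of $\lambda,\mu$. The only difference is presentational: the paper argues directly with conditional expectations of increments over a small $h$, while you route the same computation through Dynkin's formula, Fubini, and the fundamental theorem of calculus, which if anything makes the interchange step slightly more explicit.
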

\begin{proof}
Note that
\begin{equation} \label{cond}
\lim_{h\rightarrow 0}\frac{\ex[\hat{Y}_n(t+h)-\hat{Y}_n(t)|\hat{Y}_n(t)\in (0,n)]}{h} = \frac{\lambda(t)}{n}[n-\hat{Y}_n(t)]\hat{Y}_n(t)-\mu(t)\hat{Y}_n(t)
\end{equation}
because, for small $h$, $\hat{Y}_n(t+h)-\hat{Y}_n(t)$ is equal to $1$ and $-1$ with probability $\frac{\lambda(t)}{n}[n-\hat{Y}_n(t)]\hat{Y}_n(t)h$ and $\mu(t)\hat{Y}_n(t)h$,
respectively, and takes on all other values with probability $o(h)$.
Taking the expectation of \eqref{cond} and further interchanging the differentiation and expectation operators,
which is allowed since $\hat{Y}_n(t)$ takes on only finitely many possible values for a fixed $n$, leads to \eqref{uncond}.
\end{proof}

\begin{lemma}[Upper Bound]
	\label{lem:upper_bound}
$\ex[Y_n(t)] \leq Y(t)$. \\
\end{lemma}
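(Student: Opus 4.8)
The plan is to reduce the lemma to a scalar differential inequality for the mean $m_n(t) := \ex[Y_n(t)]$ and then invoke the comparison principle for ODEs (Theorem~\ref{thm:comparison}). First I would take Lemma~\ref{lem:uncond} and divide both sides by $n$; since $Y_n(t) = \hat{Y}_n(t)/n$, this yields
\begin{equation*}
m_n'(t) = \lambda(t)\,\ex[(1-Y_n(t))Y_n(t)] - \mu(t)\, m_n(t),
\end{equation*}
where differentiability of $m_n$ in $t$ is inherited from Lemma~\ref{lem:uncond}: the right-hand side there is continuous in $t$, as $\lambda$ and $\mu$ are continuous and $\hat{Y}_n(t)$ takes only finitely many values for fixed $n$.

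Next I would bound the quadratic term using Jensen's inequality. Writing $\ex[(1-Y_n)Y_n] = m_n - \ex[Y_n^2]$ and using $\ex[Y_n^2] \ge m_n^2$, together with $0 \le Y_n(t) \le 1$ and $\lambda(t) > 0$, gives the differential inequality
\begin{equation*}
m_n'(t) \;\le\; \lambda(t)\, m_n(t)\bigl(1 - m_n(t)\bigr) - \mu(t)\, m_n(t) \;=:\; g(t, m_n(t)).
\end{equation*}
By \eqref{ydefode}, the deterministic function $Y(t)$ satisfies $Y'(t) = g(t, Y(t))$ with equality, and it shares the initial value $Y(0) = y_0 = m_n(0)$.

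The conclusion then follows from the standard comparison theorem (Theorem~\ref{thm:comparison}): on the bounded region where the trajectories live, $g(t,\cdot)$ is Lipschitz (by continuity of $\lambda,\mu$ and $0 \le m_n(t), Y(t) \le 1$), $m_n$ obeys $m_n' \le g(t,m_n)$ while $Y$ obeys the equation exactly, and the two agree at $t=0$; hence $m_n(t) = \ex[Y_n(t)] \le Y(t)$ for all $t \ge 0$, as claimed. I expect the only delicate point to be the bookkeeping needed to apply the comparison principle cleanly --- in particular confirming that $m_n$ is $C^1$ and that $g$ has the required regularity --- but this is routine given the finite state space of $\hat{Y}_n$ and the assumed continuity of the rate functions; the essential content is the Jensen step $\ex[Y_n^2] \ge (\ex[Y_n])^2$ combined with the positivity of $\lambda(t)$.
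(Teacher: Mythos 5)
Your proof is correct and follows essentially the same route as the paper's: divide the identity of Lemma~\ref{lem:uncond} by $n$, apply Jensen's inequality $\ex[Y_n(t)^2]\ge\ex[Y_n(t)]^2$ to obtain the differential inequality matching the right-hand side of \eqref{ydefode}, and conclude via the comparison theorem (Theorem~\ref{thm:comparison}). The regularity bookkeeping you flag (finitely many values of $\hat{Y}_n(t)$, continuity of $\lambda,\mu$) is exactly what the paper relies on implicitly, so there is no gap.
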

\begin{proof}
From Lemma~\ref{lem:uncond}, we divide both sides of \eqref{uncond} by $n$ to obtain
\begin{equation*}
\frac{\partial \ex[Y_n(t)]}{\partial t}=\lambda(t)\ex\Big[\frac{n-\hat{Y}_n(t)}{n}\cdot\frac{\hat{Y}_n(t)}{n}\Big]-\mu(t)\ex\left[\frac{\hat{Y}_n(t)}{n}\right],
\end{equation*}
or
\begin{equation}
\frac{\partial \ex[Y_n(t)]}{\partial t}=\lambda(t)\left(\ex[Y_n(t)]-\ex[Y_n(t)^2]\right)-\mu(t)\ex[Y_n(t)].  \label{ubeq}
\end{equation}
Applying $\ex[Y_n(t)^2]\geq \ex[Y_n(t)]^2$ to \eqref{ubeq} yields
\begin{align*}
\frac{\partial \ex[Y_n(t)]}{\partial t}&\leq\lambda(t)\left(\ex[Y_n(t)]-\ex[Y_n(t)]^2\right)-\mu(t)\ex[Y_n(t)]\notag\\
&=\lambda(t)\left(1-\ex[Y_n(t)]\right)\ex[Y_n(t)]-\mu(t)\ex[Y_n(t)] .
\end{align*}
Upon combining
this
and the definition of $Y(t)$ (i.e., ODE \eqref{ydefode}), we have $\ex[Y_n(t)]\leq Y(t)$ for all $t$ due to Theorem~\ref{thm:comparison}.
\end{proof}

\begin{lemma}[Lower Bound]
	\label{lem:lower_bound}
Define a function $z_n(t)$ such that $z_n(0)=y_0$ and
\begin{equation}\label{ztdef}
z'_n(t)=\lambda(t)\left(z_n(t)-w_n(t)\right)-\mu(t)z_n(t),
\end{equation}
where $w_n(t)$ satisfies $w_n(0)=y^2_0$ and
\begin{align}\label{wtdef}
	w'_n(t)=2\lambda(t)(w_n(t)-w_n(t)^{1.5})-2\mu(t)w_n(t) +\frac{1}{n}\left[\lambda(t)+\mu(t)\right].
\end{align}
Then we have $z_n(t)\leq \ex[Y_n(t)]$ for all $t$.
\end{lemma}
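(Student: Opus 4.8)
The plan is to bound $\ex[Y_n(t)]$ from below by coupling the first two moments of $Y_n(t)$ to the ODE pair $(z_n,w_n)$ of \eqref{ztdef}--\eqref{wtdef}, in the same spirit as the proof of Lemma~\ref{lem:upper_bound} but now carrying the second moment explicitly instead of discarding it. Write $m_1(t) := \ex[Y_n(t)]$ and $m_2(t) := \ex[Y_n(t)^2]$. From Lemma~\ref{lem:uncond} --- equivalently, from \eqref{ubeq} --- we already have $m_1'(t) = \lambda(t)\big(m_1(t) - m_2(t)\big) - \mu(t) m_1(t)$, which is exactly \eqref{ztdef} with $w_n$ replaced by $m_2$. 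Hence, once we show $m_2(t)\le w_n(t)$ for all $t$, the lemma follows by a one-dimensional comparison: setting $D := m_1 - z_n$ we get $D(0)=0$ and $D'(t) = \big(\lambda(t)-\mu(t)\big)D(t) + \lambda(t)\big(w_n(t)-m_2(t)\big) \ge \big(\lambda(t)-\mu(t)\big)D(t)$, so multiplying by the integrating factor $\exp\!\big(-\int_0^t(\lambda-\mu)\,ds\big)$ shows $D$ is nonnegative for all $t\ge0$, i.e.\ $z_n(t)\le\ex[Y_n(t)]$. (Equivalently, $z_n$ is a subsolution of the linear, hence Lipschitz, equation $u' = \lambda(u-m_2)-\mu u$ solved by $m_1$ with the same initial value, so Theorem~\ref{thm:comparison} applies.)

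The core step is therefore to prove $m_2(t)\le w_n(t)$. I would apply the infinitesimal generator of $\hat Y_n$ to the test function $y\mapsto y^2$: since $\hat Y_n$ jumps up by $1$ at rate $\frac{\lambda(t)}{n}(n-\hat Y_n)\hat Y_n$ and down by $1$ at rate $\mu(t)\hat Y_n$, with $(y\pm1)^2-y^2 = \pm 2y+1$, interchanging $\frac{d}{dt}$ and $\ex[\cdot]$ (valid because $\hat Y_n(t)$ takes finitely many values, exactly as in Lemma~\ref{lem:uncond}) and dividing by $n^2$ gives
\begin{align*}
m_2'(t) &= 2\lambda(t) m_2(t) - 2\lambda(t)\,\ex[Y_n(t)^3] - 2\mu(t) m_2(t) \\
&\qquad + \frac{\lambda(t)}{n}\big(m_1(t) - m_2(t)\big) + \frac{\mu(t)}{n}\,m_1(t).
\end{align*}
Two elementary facts then close the estimate. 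First, Jensen's inequality for the convex map $u\mapsto u^{3/2}$ on $[0,\infty)$ applied to the nonnegative variable $Y_n(t)^2$ gives $\ex[Y_n(t)^3] = \ex[(Y_n(t)^2)^{3/2}] \ge m_2(t)^{3/2}$, so $-2\lambda(t)\,\ex[Y_n(t)^3]\le -2\lambda(t) m_2(t)^{1.5}$. Second, $0\le Y_n(t)\le 1$ forces $0\le m_1(t)\le 1$ and $m_2(t)\ge 0$, so the last two terms are at most $\frac1n\big(\lambda(t)+\mu(t)\big)$. Substituting,
\begin{equation*}
m_2'(t) \;\le\; 2\lambda(t)\big(m_2(t)-m_2(t)^{1.5}\big) - 2\mu(t) m_2(t) + \frac1n\big(\lambda(t)+\mu(t)\big),
\end{equation*}
which is precisely the right-hand side of \eqref{wtdef} evaluated at $m_2(t)$. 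Since $m_2(0)=y_0^2=w_n(0)$ and the right-hand side of \eqref{wtdef} is locally Lipschitz in its state variable on $[0,\infty)$ (the only nonpolynomial term, $w^{1.5}$, lies in $C^1([0,\infty))$ with derivative $\frac32\sqrt w$ bounded on bounded sets, and $w_n$ stays bounded, as shown in the proof of Theorem~\ref{thm:Kurtz-new}), Theorem~\ref{thm:comparison} yields $m_2(t)\le w_n(t)$ for all $t\ge0$; combined with the first paragraph, this proves $z_n(t)\le\ex[Y_n(t)]$.

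The main obstacle I anticipate is the bookkeeping in the second-moment identity --- in particular, making the two $O(1/n)$ correction terms land exactly on $\frac1n(\lambda(t)+\mu(t))$ rather than on something merely of that order --- together with the mild regularity check needed to invoke the comparison theorem near the boundary $w=0$; these are handled by, respectively, the crude bounds $m_1\le 1$, $m_2\ge 0$, and the observation that $w\mapsto w^{1.5}$ is $C^1$ (hence locally Lipschitz) on all of $[0,\infty)$. Everything else is the same linear ODE comparison already used for the upper bound in Lemma~\ref{lem:upper_bound}.
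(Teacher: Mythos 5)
Your proposal is correct and follows essentially the same route as the paper's own proof: derive the second-moment identity from the jump rates, apply $\ex[Y_n(t)^3]\ge\ex[Y_n(t)^2]^{1.5}$ together with the crude $[0,1]$ bounds to land on the differential inequality matching \eqref{wtdef}, invoke the comparison theorem to get $\ex[Y_n(t)^2]\le w_n(t)$, and then compare the first-moment equation against \eqref{ztdef}. The only deviations are cosmetic: your explicit integrating-factor argument in place of a second appeal to Theorem~\ref{thm:comparison}, and the added local-Lipschitz check for $w\mapsto w^{1.5}$, which the paper leaves implicit.
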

\begin{proof}
Similar to the argument in the proof of the upper bound, since for small $h$, $\hat{Y}_n(t+h)^2-\hat{Y}_n(t)^2$
is equal to $(\hat{Y}_n(t)+1)^2-\hat{Y}_n(t)^2$ and $(\hat{Y}_n(t)-1)^2-\hat{Y}_n(t)^2$ with probability
$\frac{\lambda(t)}{n}[n-\hat{Y}_n(t)]\hat{Y}_n(t)h$ and $\mu(t)\hat{Y}_n(t)h$, respectively,
and taking on all other values with probability $o(h)$, we have
\begin{align*}
\frac{\partial \ex[\hat{Y}_n(t)^2]}{\partial t}\notag
&=\ex\left[((\hat{Y}_n(t)+1)^2-\hat{Y}_n(t)^2)\cdot\frac{\lambda(t)}{n}\hat{Y}_n(t)(n-\hat{Y}_n(t))\right]\notag\\
&\quad +\ex[((\hat{Y}_n(t)-1)^2-\hat{Y}_n(t)^2)\cdot\mu(t)\hat{Y}_n(t)].
\end{align*}
Upon dividing by $n^2$ on both sides together with simple term rearrangements, this becomes
\begin{align}
\frac{\partial \ex[Y_n(t)^2]}{\partial t}\notag
=&\lambda(t)\ex\left[\frac{(\hat{Y}_n(t)+1)^2-\hat{Y}_n(t)^2}{n}\cdot Y_n(t)(1-Y_n(t))\right]\notag\\
&\quad +\mu(t)\ex\left[\frac{((\hat{Y}_n(t)-1)^2-\hat{Y}_n(t)^2)}{n}\cdot Y_n(t)\right] \nonumber\\
=&\lambda(t)\ex\left[\frac{2\hat{Y}_n(t)+1}{n}\cdot Y_n(t)(1-Y_n(t))\right]
+\mu(t)\ex\left[\frac{-2\hat{Y}_n(t)+1}{n}\cdot Y_n(t)\right] \nonumber\\
=&\lambda(t)\ex\left[(2Y_n(t)+1/n)\cdot (Y_n(t)-Y_n(t)^2)\right]
+\mu(t)\ex\left[(-2Y_n(t)+1/n)\cdot Y_n(t)\right] \nonumber\\
=&\lambda(t)\ex\left[2Y_n(t)^2-2Y_n(t)^3+Y_n(t)/n-Y_n(t)^2/n\right]
+\mu(t)\ex\left[-2Y_n(t)^2+Y_n(t)/n\right] \nonumber\\
=&2\lambda(t)(\ex[Y_n(t)^2]-\ex[Y_n(t)^3])-2\mu(t)\ex[Y_n(t)^2] \notag\\
&\quad +\frac{1}{n}\Big[\lambda(t) (\ex[Y_n(t)]-\ex[Y_n(t)^2])+\mu(t)\ex[Y_n(t)]\Big] . \label{interexp}
\end{align}

We next apply $\ex[Y_n(t)^3]\geq \ex[Y_n(t)^2]^{1.5}$ to the $-\ex[Y_n(t)^3]$ inside the parentheses of the first term of \eqref{interexp},
and $\ex[Y_n(t)^2]\geq \ex[Y_n(t)]^2$ to the $-\ex[Y_n(t)^2]$ inside the parentheses on the second line of \eqref{interexp},
and thus obtain
\begin{align*}
\frac{\partial \ex[Y_n(t)^2]}{\partial t}
\leq&2\lambda(t)(\ex[Y_n(t)^2]-\ex[Y_n(t)^2]^{1.5})-2\mu(t)\ex[Y_n(t)^2]\notag\\
&\quad +\frac{1}{n}\left[\lambda(t) (\ex[Y_n(t)]-\ex[Y_n(t)]^2)+\mu(t)\ex[Y_n(t)]\right] .
\end{align*}
Noting that $\ex[Y_n(t)]$ and $1-\ex[Y_n(t)]$ are both within [0,1], we have
\begin{align}
\frac{\partial \ex[Y_n(t)^2]}{\partial t}
\leq&2\lambda(t)(\ex[Y_n(t)^2]-\ex[Y_n(t)^2]^{1.5})-2\mu(t)\ex[Y_n(t)^2]
+\frac{1}{n}\left[\lambda(t)+\mu(t)\right] . \label{ysquareine}
\end{align}

Now applying Theorem~\ref{thm:comparison} to $\ex[Y_n(t)^2]$ with respect to \eqref{ysquareine}, we find that
\begin{equation}
\ex[Y_n(t)^2]\leq w_n(t),\label{ubonsquare}
\end{equation}
by the defition of $w_n(t)$. Substituting \eqref{ubonsquare} into \eqref{ubeq} then leads to 
\begin{equation}\label{yine}
\frac{\partial \ex[Y_n(t)]}{\partial t}\geq\lambda(t)\left(\ex[Y_n(t)]-w_n(t)\right)-\mu(t)\ex[Y_n(t)] .
\end{equation}
Upon treating $w_n(t)$ as an exogenous function and applying Theorem~\ref{thm:comparison} once again, this time to $\ex[Y_n(t)]$ with respect to \eqref{yine},
we obtain the desired lower bound
\begin{equation*}
\ex[Y_n(t)]\geq z_n(t),~t\geq 0,
\end{equation*}
following \eqref{ztdef}.
\end{proof}

\section{} 
\label{app:basicDST}
This appendix provides some basic and classical results from dynamical systems theory that are exploited to establish some of our main results.

\begin{theorem}[Hartman-Grobman~\cite{Grob59,Hart60,Grob62,Hart63}]
	If a $d^{th}$-order system of differential equations has an equilibrium $v$ with linearization matrix $A$, and if $A$ has no zero or pure imaginary
	eigenvalues, then the phase portrait for the system near the equilibrium is obtained from the phase portrait of the linearized system $Dx = Ax$
	via a continuous change of coordinates.
	\label{thm:Hartman-Grobman}
\end{theorem}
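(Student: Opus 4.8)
The plan is to follow the classical Lyapunov--Perron / fixed-point proof, reducing the statement to a conjugacy of the time-one maps of the flows and then averaging up to the full flow. First I would normalize: translate so the equilibrium is at the origin and write the system as $\dot{x} = Ax + g(x)$ with $g \in C^1$, $g(0) = 0$, $Dg(0) = 0$. Since only a local conjugacy near $v$ is claimed, I would replace $g$ by $\chi_\rho g$, where $\chi_\rho$ is a smooth cutoff equal to $1$ on the ball $B_{\rho/2}(0)$ and supported in $B_\rho(0)$; for $\rho$ small the modified nonlinearity $\tilde g$ is globally Lipschitz with Lipschitz constant $\epsilon$ as small as we wish, and it suffices to conjugate the (now globally defined) nonlinear flow $\phi_t$ of $\dot x = Ax + \tilde g(x)$ to the linear flow $\psi_t = e^{tA}$, since on $B_{\rho/2}(0)$ this flow agrees with the original one.

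Next I would exploit hyperbolicity. Because $A$ has no eigenvalue on the imaginary axis, $\Reals^d = E^s \oplus E^u$ splits into $A$-invariant subspaces and, after replacing the Euclidean norm by an adapted (Lyapunov) norm, the linear time-one map $L := e^{A}$ satisfies $\|L|_{E^s}\| \le a < 1$ and $\|L^{-1}|_{E^u}\| \le a < 1$; let $P^s, P^u$ denote the corresponding projections and $f := \phi_1 = L + (f - L)$, where $f - L$ is bounded, continuous and globally $\epsilon'$-Lipschitz for an $\epsilon'$ controlled by $\epsilon$. The heart of the proof is to produce $H = \mathrm{id} + h$ with $h \in C_b(\Reals^d, \Reals^d)$ solving the conjugacy equation $H \circ f = L \circ H$. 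Substituting and splitting along $E^s \oplus E^u$ turns this into a fixed-point equation $h = \mathcal{T}(h)$ in which the stable component of $\mathcal{T}$ involves composing with $f$ and multiplying by $L|_{E^s}$ (a contraction), and the unstable component involves $L^{-1}|_{E^u}$ and $f^{-1}$ (also contracting); using the norm estimates above and the smallness of $\epsilon$, $\mathcal{T}$ is a contraction on $C_b$ with the sup norm, so it has a unique bounded continuous fixed point $h$, whence $H$ is continuous.

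I would then run the symmetric construction to obtain a bounded continuous $\bar H = \mathrm{id} + \bar h$ with $\bar H \circ L = f \circ \bar H$. Composing the two relations, $\bar H \circ H$ commutes with $f$ and differs from the identity by a bounded continuous map; the uniqueness half of the same fixed-point argument (now applied to self-conjugacies of $f$, for which the identity is one solution) forces $\bar H \circ H = \mathrm{id}$, and symmetrically $H \circ \bar H = \mathrm{id}$, so $H$ is a homeomorphism conjugating $\phi_1$ to $\psi_1$. Finally, to pass from the time-one map to the flow I would average, setting
\[
\tilde H := \int_0^1 \psi_{-s} \circ H \circ \phi_s \, ds ;
\]
a short computation using that $\psi_{-s}\circ H\circ\phi_s$ is $1$-periodic in $s$ (because $H\circ\phi_1 = \psi_1\circ H$) shows $\tilde H \circ \phi_t = \psi_t \circ \tilde H$ for all $t$, and since $\sup_{s\in[0,1]}\|\psi_{-s}\| < \infty$ the map $\tilde H$ still differs from a homeomorphism by a bounded continuous term; restricting $\tilde H$ to a neighborhood of $v$ on which $\tilde g = g$ yields the asserted continuous change of coordinates taking the phase portrait of the linearization to that of the nonlinear system near $v$. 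The main obstacle is the hyperbolic bookkeeping in the fixed-point step: choosing the adapted norm, splitting the conjugacy equation along $E^s \oplus E^u$ so that each block is genuinely contracting, and tuning the cutoff radius so that a single smallness parameter $\epsilon$ simultaneously delivers existence of $h$, its uniqueness, and hence the homeomorphism property; everything else is routine.
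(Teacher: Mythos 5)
The paper does not prove this statement at all: Theorem~\ref{thm:Hartman-Grobman} appears in Appendix~\ref{app:basicDST} as a quoted classical result, attributed to Grobman and Hartman, and is used only as an ingredient in the stability analysis of Theorem~\ref{thm:CTCS-fixed}. So there is no in-paper argument to compare against; what you have written is the standard Lyapunov--Perron/fixed-point proof found in the dynamical-systems literature, and in outline it is correct: cutoff of the nonlinearity to make it globally Lipschitz with small constant, an adapted norm making $L=e^{A}$ a contraction on $E^s$ and an expansion on $E^u$, a Banach fixed-point argument in $\mathrm{id}+C_b$ for the conjugacy of the time-one maps, the symmetric construction plus uniqueness to get a homeomorphism, and the averaging integral $\tilde H=\int_0^1\psi_{-s}\circ H\circ\phi_s\,ds$ (which uses linearity of $\psi_{-s}$ and $1$-periodicity of $s\mapsto\psi_{-s}\circ H\circ\phi_s$) to upgrade to a flow conjugacy. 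Two small points deserve tightening. First, in the splitting of the conjugacy equation $H\circ f=L\circ H$ the stable block is the one that composes with $f^{-1}$ and multiplies by $L|_{E^s}$, while the unstable block composes with $f$ and multiplies by $L^{-1}|_{E^u}$; you state the pairing the other way around, which as written would not reproduce the conjugacy equation (it is harmless only because the contraction constants come from the linear factors, but the fixed-point equation must be set up consistently). Second, at the last step, observing that $\tilde H$ differs from the identity by a bounded continuous term does not by itself make $\tilde H$ a homeomorphism; the standard way to close this is to note that $\tilde H$ also solves the time-one conjugacy equation within the class $\mathrm{id}+C_b$, so by the uniqueness half of your fixed-point argument $\tilde H=H$, which is already known to be a homeomorphism. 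With those repairs the proposal is a complete and correct proof of the quoted theorem, though for the purposes of this paper a citation suffices.
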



\begin{theorem}[Lyapunov Global Stability~\cite{Lyap1892,LaSLef61,Vidy02}]
	Assume that there exists a scalar function $V$ of the state $x$, with continuous first order derivatives such that
	\begin{itemize}
		\item $V(x)$ is positive definite ,
		\item $\dot{V}(x)$ is negative definite ,
		\item $V(x) \rightarrow \infty$ as $\|x\| \rightarrow \infty$ ,
	\end{itemize}
	then the equilibrium at the origin is globally asymptotically stable.
	\label{thm:Lyapunov:global}
\end{theorem}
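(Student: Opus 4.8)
The plan is to follow the classical Barbashin--Krasovskii argument, establishing in turn (i) Lyapunov stability of the origin, (ii) boundedness of every forward trajectory, and (iii) global attractivity, and then combining (i) and (iii) to conclude global asymptotic stability. Throughout, write $x(t)$ for the solution with $x(0) = x_0$, recall that \emph{positive definite} means $V(0)=0$ and $V(x)>0$ for $x\ne 0$, and note that along any trajectory $\frac{d}{dt}V(x(t)) = \dot V(x(t)) \le 0$, so $t \mapsto V(x(t))$ is nonincreasing.

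For step (i), fix $\epsilon > 0$. Since $V$ is continuous and positive definite, $m := \min_{\norm{x} = \epsilon} V(x) > 0$; by continuity of $V$ and $V(0) = 0$ there is $\delta \in (0,\epsilon)$ with $V(x) < m$ whenever $\norm{x} < \delta$. If $\norm{x_0} < \delta$, then $V(x(t)) \le V(x_0) < m$ for all $t \ge 0$, so $x(t)$ can never reach the sphere $\norm{x} = \epsilon$; hence $\norm{x(t)} < \epsilon$ for all $t$, which is precisely Lyapunov stability of the origin.

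For step (ii), the radial unboundedness hypothesis $V(x) \to \infty$ as $\norm{x} \to \infty$ guarantees that the sublevel set $\{x : V(x) \le V(x_0)\}$ is bounded, hence contained in some closed ball $B_R$; since $V(x(t)) \le V(x_0)$ for all $t$, the entire forward trajectory stays in the compact set $B_R$, and in particular the solution exists for all $t \ge 0$. For step (iii), the nonincreasing function $V(x(t))$ is bounded below by $0$, so it converges to some $c \ge 0$, and I claim $c = 0$. If $c > 0$, then by continuity of $V$ with $V(0)=0$ there is $d > 0$ with $\norm{x(t)} \ge d$ for all $t$, so $x(t)$ stays in the compact annular set $A := \{d \le \norm{x} \le R\}$, on which the continuous function $\dot V$ is negative definite and hence attains a maximum $-\gamma < 0$; then $V(x(t)) = V(x_0) + \int_0^t \dot V(x(s))\,ds \le V(x_0) - \gamma t \to -\infty$, contradicting $V \ge 0$, so $c = 0$. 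Finally, if $x(t) \not\to 0$ there are $\eta > 0$ and $t_k \to \infty$ with $\norm{x(t_k)} \ge \eta$; by compactness of $B_R$ a subsequence converges to some $x^*$ with $\norm{x^*} \ge \eta$, and by continuity of $V$, $V(x^*) = \lim_k V(x(t_k)) = c = 0$, contradicting positive definiteness. Hence $x(t) \to 0$ for every $x_0$, which together with step (i) gives global asymptotic stability.

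The main obstacle is step (ii): without radial unboundedness the sublevel sets of $V$ need not be bounded, and a trajectory could in principle escape to infinity while $V(x(t))$ remains bounded, so this hypothesis is exactly the ingredient that must be leveraged to confine the trajectory to a compact set before the negative definiteness of $\dot V$ can be invoked. Some care is also needed in step (iii) to ensure $\dot V$ is bounded \emph{away} from $0$ on the annular region $A$ — this uses continuity of $\dot V$ and compactness of $A$ — rather than merely being negative pointwise there, which alone would not force $V(x(t)) \to -\infty$.
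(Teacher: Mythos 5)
Your argument is correct and complete: it is the standard Barbashin--Krasovskii proof of global asymptotic stability, and each of the three steps (stability via sublevel sets of $V$, forward boundedness via radial unboundedness, and attractivity via the limit $V(x(t))\to c$ with the compact-annulus contradiction forcing $c=0$) is carried out properly. Note, however, that the paper itself does not prove this statement at all: it appears in Appendix~\ref{app:basicDST} purely as a cited classical result (Lyapunov, LaSalle--Lefschetz, Vidyasagar) that is invoked in the proofs of Theorems~\ref{thm:CTCS-fixed} and \ref{thm:CTCS-vary}, so there is no internal proof to compare against; your write-up supplies the textbook argument the citation points to. Two minor points worth making explicit if you keep this as a self-contained proof: the step that bounds $\dot V$ away from zero on the annulus $A$ uses continuity of $x\mapsto \dot V(x)$, which holds because $V$ is $C^1$ and the vector field of the underlying system is continuous (a standing assumption in the dynamical-systems setting, not stated in the theorem itself); and forward completeness of solutions, which you correctly deduce from the trajectory being trapped in a compact sublevel set, deserves the one-line justification you gave rather than being taken for granted.
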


The following result is a consequence of Gronwall's inequality:
\begin{theorem}[Comparison theorem for scalar differential equations~\cite{blah1,blah2}]
	Suppose $f(x,t)$ is continuous in $t$ and Lipschitz continuous in $x$.   Suppose $u(t)$ and $v(t)$ are $C^1$ functions such that
	$\frac{du}{dt} \leq f(u(t),t)$ and $\frac{dv}{dt} = f(v(t),t)$.
	If $u(t_0) \leq v(t_0)$, then $u(t)\leq v(t)$ for $t\geq t_0$.
	\label{thm:comparison}
\end{theorem}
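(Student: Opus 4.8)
The plan is to argue by contradiction using the last time at which $u$ sits below $v$, combined with a Gronwall-type estimate on the gap $w(t) := u(t) - v(t)$. We have $w(t_0) \le 0$; suppose toward a contradiction that $w(t_1) > 0$ for some $t_1 > t_0$. The key construction is the crossing time $t^* := \sup\{ t \in [t_0, t_1] : w(t) \le 0 \}$, for which continuity of $w$ gives $w(t^*) = 0$ and $w(t) > 0$ for all $t \in (t^*, t_1]$.

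Next I would convert the two hypotheses into a linear differential inequality for $w$ on this sub-interval. On $[t^*, t_1]$ the trajectories of $u$ and $v$ stay in a compact set, so the Lipschitz hypothesis supplies a constant $L \ge 0$ with $|f(u(s),s) - f(v(s),s)| \le L |u(s) - v(s)|$ there. Subtracting the relation $\dot v = f(v,s)$ from $\dot u \le f(u,s)$ and using $w(s) > 0$ on $(t^*, t_1]$ yields
$$\dot w(s) \;\le\; f(u(s),s) - f(v(s),s) \;\le\; L\,|w(s)| \;=\; L\, w(s).$$
Gronwall's inequality — equivalently, integrating $\frac{d}{ds}\big(e^{-Ls}w(s)\big) \le 0$ from $t^*$ to $t_1$ — then forces $w(t_1) \le e^{L(t_1 - t^*)} w(t^*) = 0$, contradicting $w(t_1) > 0$. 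Hence no such $t_1$ exists and $u(t) \le v(t)$ for all $t \ge t_0$.

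The argument is short, and the only point requiring care — which I expect to be the main (minor) obstacle — is the bookkeeping that lets one invoke Gronwall cleanly: by passing to the crossing time $t^*$ one guarantees $w > 0$ on the interval of interest, so $|w| = w$ and the bound $\dot w \le L w$ is a genuine linear inequality rather than something involving the non-smooth function $|w|$. It is also worth stating explicitly that $L$ is taken uniform over the compact region swept out by the two solutions on $[t_0,t_1]$, which is where local Lipschitz continuity in $x$ is upgraded to a usable bound on that region; no further ingredients are needed.
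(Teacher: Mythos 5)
Your argument is correct: the crossing-time construction makes $w=u-v$ nonnegative on $[t^*,t_1]$, so the Lipschitz bound yields the genuine linear inequality $\dot w\le Lw$ and integrating $e^{-Ls}w(s)$ gives the contradiction, with the compact-set remark properly justifying a uniform Lipschitz constant. The paper itself offers no proof of this theorem, stating it only as a classical consequence of Gronwall's inequality with citations, and your Gronwall-based argument is exactly the route the paper indicates.
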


%

\section*{Acknowledgment}
The authors would like to acknowledge and thank B.~Zhang of IBM Research for
his important contributions to the proof of Theorem~\ref{thm:Kurtz-new}.



\bibliographystyle{siamplain}
\bibliography{main}
\end{document}